\newtheorem{theorem}{Theorem}[section]
\newtheorem{lemma}[theorem]{Lemma}
\newtheorem{proposition}[theorem]{Proposition}
\newtheorem{corollary}[theorem]{Corollary}
\theoremstyle{definition}
\theoremstyle{remark}
\newtheorem{remark}[theorem]{Remark}
\theoremstyle{example}
\numberwithin{equation}{section}
\newcommand{\thistheoremname}{}
\newtheorem{genericthm}[theorem]{\thistheoremname}
\newcommand{\N}{\mathbb{N}}
\newcommand{\rn}{\mathbb{R}^n}
\begin{document}

\title[Boundedness of operators on weighted Morrey spaces]
{Boundedness of operators on certain power-weighted Morrey spaces beyond the Muckenhoupt weights
}


\author{Javier Duoandikoetxea and Marcel Rosenthal} 
\address{(J. D.) Universidad del Pa\'is Vasco/Euskal Herriko Unibertsitatea, Departamento de Matem\'a\-ti\-cas/Matematika saila, 
Apdo. 644, 48080 Bilbao, Spain}

\email{javier.duoandikoetxea@ehu.eus, marcel.rosenthal@uni-jena.de} 
\subjclass[2010]{Primary }
\thanks{The first author is supported by the grants MTM2014-53850-P of the Ministerio de Econom\'{\i}a y Competitividad (Spain) 
and grant IT-641-13 of the Basque Gouvernment.}

\keywords{Morrey spaces, Muckenhoupt weights, Hardy-Littlewood maximal operator, Calder\'{o}n-Zygmund operators} 

\begin{abstract}
We prove that for operators satistying weighted inequalities with $A_p$ weights the boundedness on a certain class of Morrey spaces holds with weights of the form $|x|^\alpha w(x)$ for $w\in A_p$. In the case of power weights the shift  with respect to the range of Muckenhoupt weights was observed by N.~Samko for the Hilbert transform, by H.~Tanaka for the Hardy-Littlewood maximal operator, and by S.~Nakamura and Y.~Sawano for Calder\'on-Zygmund operators and others. We extend the class of weights and establish the results in a very general setting, with applications to many operators. For weak type Morrey spaces, we obtain new estimates even for the Hardy-Littlewood maximal operator. Moreover, we prove the necessity of certain $A_q$ condition.
\end{abstract}

\maketitle

\section{Introduction}\label{intro}

For $1\le p<\infty$ and $0\le \lambda<n$, let the Morrey space $\mathcal L^{p,\lambda}(w)$ be the collection of all measurable functions $f$ such that 
\begin{equation}\label{normdef}
\|f\|_{\mathcal L^{p,\lambda}(w)}:=\sup_{x\in \rn, r>0}\left(\frac 1{r^\lambda}\int_{B(x,r)}|f|^pw\right)^{1/p}<\infty.
\end{equation}
We also consider the weak Morrey space $W\mathcal L^{p,\lambda}(w)$, for which 
\begin{equation*}
\|f\|_{W\mathcal L^{p,\lambda}(w)}:=\sup_{x\in \rn, r>0, t>0}\left(\frac {t^p\,w(\{y\in B(x,r): |f(y)|>t\})}{r^\lambda}\right)^{1/p}<\infty.
\end{equation*}
(Here and in what follows $w(A)$ stands for the integral of $w$ over $A$.) Clearly, $\mathcal L^{p,\lambda}(w)\subset W\mathcal L^{p,\lambda}(w)$.

N.~Samko proved in \cite{Sam09} that the Hilbert transform is a bounded operator on  $\mathcal L^{p,\lambda}(|x|^{\alpha})$ for $0< \lambda<1$ and $\lambda-1<\alpha<\lambda+p-1$. This range of values of $\alpha$ shows a shift with respect to the corresponding range in the $A_p$ class, which is $-1<\alpha<p-1$. In \cite{Ta15}, H.~Tanaka explored the boundedness on $\mathcal L^{p,\lambda}(w)$ of the Hardy-Littlewood maximal operator and was able to describe necessary conditions and sufficient conditions, but not a characterization. Nevertheless, for power weights $w(x)=|x|^{\alpha}$ he obtained the sharp range $\lambda-n\le\alpha<\lambda+n(p-1)$, which in the one-dimensional case coincides with the range obtained by Samko for the Hilbert transform except at the endpoint $\alpha=\lambda-n$. Later on, S.~Nakamura and Y.~Sawano in \cite{NkSa17} studied the boundedness of the Riesz transforms and other singular integrals and obtained similar shifted ranges for the case of $\mathcal L^{p,\lambda}(|x|^{\alpha})$ (with open left endpoint). 

In \cite{DuRo} the authors of this paper proved a general result involving Muckenhoupt weights, under the assumptions of the extrapolation theorem for $A_p$ weights. When particularized for the Hardy-Littlewood maximal operator or for Calder\'on-Zygmund operators, the boundedness on $\mathcal L^{p,\lambda}(w)$ was obtained for $w\in A_p\cap RH_\sigma$ in the range $0\le \lambda\le n/\sigma'$, which for $w(x)=|x|^{\alpha}$ gives the range  $\lambda-n\le\alpha<n(p-1)$. In this paper the results in \cite{DuRo} for $\mathcal L^{p,\lambda}(w)$ spaces are extended to the weights $|x|^\alpha w(x)$ for $w\in A_p\cap RH_\sigma$ and $0<\alpha<\lambda$. (We recall the definitions of the weight classes $A_p$ and the reverse H\"older classes $RH_\sigma$ in Section \ref{sec2}.) In particular, we prove the following theorem.

\begin{theorem}\label{teo1}
Let $1\le p_0<\infty$ and let $\mathcal F$ be a collection of nonnegative measurable pairs of functions. Assume that for every $(f,g)\in \mathcal F$ and every $w\in A_{p_0}$ we have
\begin{equation}\label{hyp1}
\|g\|_{L^{p_0}(w)}\le C \|f\|_{L^{p_0}(w)},
\end{equation}
where $C$ does not depend on the pair $(f,g)$ and it depends on $w$ only in terms of $[w]_{A_{p_0}}$ (defined at the beginning of Section \ref{sec2}). Then for $1<p<\infty$ and $w\in A_p\cap RH_\sigma$ it holds
 \begin{equation}\label{bound1}
\|g\|_{\mathcal L^{p,\lambda}(|x|^\alpha w)}\le C \|f\|_{\mathcal L^{p,\lambda}(|x|^\alpha w)},
\end{equation}
for $0\le \lambda<n/\sigma'$ and $0\le \alpha<\lambda$. In particular, for power weights of the form $|x|^\beta=|x|^\alpha w(x)$, the estimate  \eqref{bound1} holds 
for $\lambda-n<\beta<\lambda+n(p-1)$. If the hypothesis holds for $p_0=1$, then \eqref{bound1} also holds for $p=1$, and  in the case of power weights for $\lambda-n<\beta<\lambda$.
\end{theorem}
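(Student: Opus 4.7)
The plan is to combine three ingredients: the Rubio de Francia extrapolation theorem, the Morrey-space result from the authors' previous paper \cite{DuRo}, and a two-regime decomposition of balls that handles the extra power weight $|x|^\alpha$. First I would apply extrapolation to \eqref{hyp1} and invoke \cite{DuRo} to obtain the baseline Morrey estimate
$$\|g\|_{\mathcal L^{p,\mu}(w)}\le C\|f\|_{\mathcal L^{p,\mu}(w)} \qquad(\ast)$$
valid for every $w\in A_p\cap RH_\sigma$ and every $\mu\in[0,n/\sigma')$. To upgrade this to the weight $|x|^\alpha w$ at Morrey exponent $\lambda$, I would fix a ball $B=B(x_0,r)$ and split into two regimes.

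In the \emph{central regime} $|x_0|\le 2r$, the pointwise bound $|x|^\alpha\le (3r)^\alpha$ on $B$ reduces $r^{-\lambda}\int_B|g|^p|x|^\alpha w$ to $Cr^{-(\lambda-\alpha)}\int_B|g|^p w$; applying $(\ast)$ with $\mu=\lambda-\alpha\in[0,n/\sigma')$ then dominates this by $C\|f\|_{\mathcal L^{p,\lambda-\alpha}(w)}^p$. A dyadic annular decomposition of an arbitrary test ball around the origin---on the $k$-th annulus one has $|x|^{-\alpha}\le C(2^{-k}s)^{-\alpha}$---will yield the remaining comparison $\|f\|_{\mathcal L^{p,\lambda-\alpha}(w)}\le C\|f\|_{\mathcal L^{p,\lambda}(|x|^\alpha w)}$, with the resulting geometric series converging precisely because $\alpha<\lambda$. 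For a test ball sitting far from the origin, the factor $|x|^{-\alpha}$ is uniformly bounded on it and the subcase is immediate.

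The \emph{exterior regime} $|x_0|>2r$ is where the main difficulty lies. Here $|x|\sim|x_0|$ throughout $B$, so the task reduces to bounding $|x_0|^\alpha r^{-\lambda}\int_B|g|^p w$, in which the unbounded factor $|x_0|^\alpha$ must be absorbed into the target Morrey norm of $f$. My plan is to construct, for each such $B$, an auxiliary weight $W_B$---obtained by multiplying $w$ by a shifted power function centered at $x_0$ with decay faster than $(|x-x_0|/r)^{-\lambda}$---belonging to $A_p$ with a constant uniform in $B$ and dominating $|x|^\alpha w$ on $B$. Applying the $L^p(W_B)$-bound produced by the extrapolation step, and bounding $\int|f|^p W_B$ by $Cr^\lambda\|f\|_{\mathcal L^{p,\lambda}(|x|^\alpha w)}^p$ via a dyadic decomposition of $\rn$ into annuli around $x_0$, should then yield the desired estimate.

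The hard part will be the construction of $W_B$ in the exterior regime with a uniform $A_p$-constant: the decay off $B$ must exceed $\lambda$ for the annular sum to converge while remaining compatible with $A_p$, and it is the $RH_\sigma$ condition on $w$ together with $\lambda<n/\sigma'$ that allow this balance to be struck. The endpoint case $p=1$ would be handled analogously using the weak-type version of the extrapolation step.
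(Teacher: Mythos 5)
Your plan reproduces the architecture of the paper's proof (the same two classes of balls as in Remark \ref{select}, a dyadic annular decomposition away from the test ball, and convergence coming precisely from $\alpha<\lambda$ and $\lambda<n/\sigma'$), but it differs in mechanism at both regimes. The central regime is a genuine and correct shortcut: the embedding $\mathcal L^{p,\lambda}(|x|^\alpha w)\hookrightarrow\mathcal L^{p,\lambda-\alpha}(w)$ you need is exactly the paper's inequality \eqref{basicineq} for origin-centered balls (plus the trivial bound $|x|^{-\alpha}\le Cs^{-\alpha}\cdot(s/|y|)^{\alpha}\le C$ for balls $B(y,s)$ with $s\lesssim|y|$), and $\lambda-\alpha<n/\sigma'$ keeps you inside the admissible range of \cite{DuRo}; the paper instead reruns its full duality argument for these balls.

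The gap is in the exterior regime, and it is where the proof actually lives. You must exhibit, for every $B=B(x_0,r)$ with $|x_0|>2r$, a weight $W_B\in A_p$ with constant \emph{independent of $B$}, satisfying $W_B\gtrsim w$ on $B$ and $W_B\le c_j\,w$ on the annulus $\{2^jr\le|x-x_0|<2^{j+1}r\}$ with $\sum_jc_j2^{j\lambda}<\infty$; the decay must be measured relative to $w$ itself, since an unweighted integral of $f^p$ over an annulus cannot be controlled by $\|f\|_{\mathcal L^{p,\lambda}(|x|^\alpha w)}$. Your candidate, $w$ times a shifted power $(1+|x-x_0|/r)^{-\gamma}$ with $\gamma>\lambda$, can be made to work, but only via a lemma you neither state nor prove: for $w\in A_p\cap RH_\sigma$ and $U\in A_1$ one has $wU^\theta\in A_p$ uniformly for $0\le\theta<1/\sigma'$ (a H\"older argument using the reverse H\"older inequality of exponent $1/(1-\theta)$ on one side and the $A_1$ property of $U$ on the other; this is the factorization phenomenon behind \eqref{apandrh} and the discussion at the end of Section \ref{sec4}). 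Writing the shifted power as $U^\theta$ forces $\gamma<n/\sigma'$, and only the hypothesis $\lambda<n/\sigma'$ guarantees that some $\gamma\in(\lambda,n/\sigma')$ exists; without this lemma the product of an $A_p$ weight with an $A_1$ weight need not be $A_p$, so the uniformity you need is not automatic. The paper sidesteps the construction entirely: for $p>1$ it dualizes against $h\in L^{p'}(w,B_r)$ and uses the Rubio de Francia majorant $M(h^sw^s\chi_{B_r})^{1/s}$, whose $A_1$ constant is free and whose size and decay are quantified in \eqref{p3} and \eqref{ineqRH}, reducing $p_0>1$ to $p_0=1$ through the pairs $(f^{p_0},g^{p_0})$. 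If you supply the product lemma, your route closes; as written, the decisive step is asserted rather than proved.
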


When we say that \eqref{hyp1} holds for every $w\in A_{p_0}$ we mean that if the right-hand side is finite for a fixed $w$, then also the left-hand side is finite for the same $w$ and the inequality holds. The conclusion of the theorem is to be understood in the same way: if $f$ is in $\mathcal L^{p,\lambda}(|x|^\alpha w)$, then $g$ is in the same space and the inequality holds.

To make things clear let us say that the weights appearing in the theorem are always in some Muckenhoupt class. Indeed, $|x|^\alpha w\in A_{p+ \lambda/ n}$ for $0\le \alpha<\lambda$ and $w\in A_p$.  Being in $A_{p+ \lambda/ n}$ is not a particular restriction for the weights in our theorem, because we show in Proposition \ref{necrange} the necessity  of $u\in A_{p+ \lambda/ n}$ for the boundedness of the Hardy-Littlewood maximal operator in the Morrey space $\mathcal L^{p,\lambda}(u)$ (and even for the weak-type boundedness). We are thus forced to deal with Muckenhoupt weights. When we say that the results go beyond the Muckenhoupt range, we mean that for a fixed value of $p$ the boundedness of the involved operators holds for weights which are not necessarily in $A_p$. 

This theorem has a number of applications because we know that for many operators $T$, the pairs $(|f|, |Tf|)$ satisfy its assumptions. In particular, we recover the results for the Hardy-Littlewood maximal operator (except the left endpoint, that is, $\beta=\lambda-n$), the Hilbert transform and the Calder\'on-Zygmund operators mentioned above. But it extends also to Littlewood-Paley operators, rough singular integrals and others. Moreover, in those examples the case $p=1$ of the theorem provides a weak-type result, from $\mathcal L^{1,\lambda}(|x|^\beta)$ to $W\mathcal L^{1,\lambda}(|x|^\beta)$ for $\lambda-n<\beta<\lambda$, which for $\beta>0$ is new even for the Hardy-Littewood maximal operator.

We present some preliminary results in Section \ref{sec2}. The proof of Theorem \ref{teo1} is in Section \ref{sec3}, where we also prove another theorem suited to operators satisfying the assumptions of the so-called limited range extrapolation. In Section \ref{sec4new} we establish embeddings which allow to define the operators in the Morrey spaces by restriction. In Section \ref{sec4} we prove the necessity of the $A_{p+ \lambda/ n}$ condition for the Hardy-Littlewood maximal operator and of $A_{p+ \lambda}$ for the Hilbert transform in the case of weak-type estimates. This implies the necessity of the range of power weights for positive exponents. We also give an easy proof of the necessity for negative exponents. In the case of the strong estimates and power weights this was proved  by Tanaka in \cite{Ta15} checking his more general necessary condition. We extend the necessity to the weak-type estimates. In the same section we prove the estimate for the left endpoint for power weights (that is, for the weight $|x|^{\lambda-n}$).

\section{Preliminary results}\label{sec2}

Let 
$w\in L_1^\textrm{loc}(\rn)$ with $w>0$ almost everywhere. We say that $w$ is a \textit{Muckenhoupt weight} belonging to $A_p$ for $1<p<\infty$ if
\begin{equation*} 
  [w]_{A_p}\equiv 
	\sup_B \frac{w(B)}{|B|} \left( \frac{w^{1-p'}(B)}{|B|} \right)^{p-1}<\infty,
\end{equation*}
where the supremum is taken over all Euclidean balls $B$ in $\rn$. The quantity  $[w]_{A_p}$ is the $A_p$ constant of $w$.
We say that $w$  belongs to $A_1$ if, for any Euclidean ball $B$,
\begin{equation*} 
  \frac{w(B)}{|B|}\le c w(x) \text{ for almost all } x\in B.
\end{equation*}
The $A_1$ constant of $w$, denoted by  $[w]_{A_1}$, is the smallest constant $c$ for which the inequality holds. 

We say that a nonnegative locally integrable function $w$ on $\rn$ belongs to the \textit{reverse H\"older class} $RH_\sigma$ for $1<\sigma<\infty$ if it satisfies the \textit{reverse H\"older inequality} with exponent $\sigma$, i.e., \begin{equation*}
  \left(\frac{1}{|B|}\int_{B} w(x)^\sigma dx \right)^\frac{1}{\sigma}\le \frac{c}{|B|} \int_{B} w(x) dx,
\end{equation*}
where the constant $c$ is universal for all Euclidean balls $B\subset \rn$.

\begin{remark} \label{propap} Some results for weights  are the following. We use repeatedly the first two throughout the paper.
\begin{itemize}
\item[(i)] If $Mh<\infty$ a.e., then $(Mh)^{1/s}\in A_1$ and its $A_1$ constant depends on $s$, but not on $h$. Moreover, $(Mh)^{1/s}\in A_1\cap RH_\sigma$ if $s>\sigma$.
\item[(ii)] Let $w\in RH_\sigma$. For any ball $B$ and any measurable $E\subset B$ it holds that 
\begin{equation}\label{ineqRH}
 \frac{w(E)}{w(B)}\le c \left(\frac{|E|}{|B|}\right)^{1/\sigma'}. 
\end{equation}
 Since $w\in A_p$ implies that $w\in RH_\sigma$ for some $\sigma$, the inequality holds for each $A_p$ weight for the appropriate $\sigma$. 
\item[(iii)] Weights simultaneously in $A_p$ and $RH_\sigma$ can be described  (\cite{JN91}) as
\begin{equation}\label{apandrh}
A_p\cap RH_\sigma =\{w: w^\sigma\in A_{\sigma(p-1)+1}\}.
\end{equation}
\end{itemize}

\end{remark}

\begin{remark}\label{select}
When dealing with the definition of the norm \eqref{normdef}, we only need to take into account two types of balls: balls centered at the origin and balls of the form $B(x,r)$ with $r<|x|/4$. Indeed, if we have a ball $B(x,r)$ with $r\ge |x|/4$, it holds that $B(x,r)\subset B(0,5r)$, and since the radii are comparable we can replace the smaller ball by the larger one.
\end{remark}

The following lemma provides an estimate which is used in the proofs of the theorems.
\begin{lemma}
 Let $1\le p<\infty$ and $0\le \lambda <n$. Let $f\ge 0$ in $\mathcal L^{p,\lambda}(|x|^\alpha w)$ and $r>0$. If $\alpha<\lambda$, then
 \begin{equation}\label{basicineq}
\left(\int_{B(0,r)}f^{p} w\right)^{\frac 1{p}}\le C r^{\frac{\lambda-\alpha}p} \|f\|_{\mathcal L^{p,\lambda}(|x|^\alpha w)}.
\end{equation}
The constant depends only on $\alpha, \lambda$ and  $p$.
\end{lemma}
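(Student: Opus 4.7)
The plan is to decompose the ball $B(0,r)$ into dyadic annuli centered at the origin and use that on each annulus the weight $|x|^{-\alpha}$ is essentially constant, so the control on $\int f^p |x|^\alpha w$ provided by the Morrey norm transfers to a control on $\int f^p w$.

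Concretely, set $A_k = B(0, 2^{-k}r) \setminus B(0, 2^{-k-1}r)$ for $k \geq 0$, so that $B(0,r) = \bigcup_{k\ge 0} A_k$ (up to a set of measure zero) and on $A_k$ one has $2^{-k-1}r \le |x| \le 2^{-k}r$. Hence $|x|^{-\alpha} \le 2^{|\alpha|}(2^{-k}r)^{-\alpha}$ on $A_k$, regardless of the sign of $\alpha$. Writing $f^p w = (f^p |x|^\alpha w)\cdot |x|^{-\alpha}$ and enlarging the annulus to the ball $B(0,2^{-k}r)$, the defining inequality of the Morrey norm yields
\begin{equation*}
\int_{A_k} f^p w \le C (2^{-k}r)^{-\alpha} \int_{B(0,2^{-k}r)} f^p |x|^\alpha w \le C (2^{-k}r)^{\lambda - \alpha} \|f\|_{\mathcal{L}^{p,\lambda}(|x|^\alpha w)}^p.
\end{equation*}

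Summing this over $k \geq 0$ produces a geometric series in $2^{-k(\lambda-\alpha)}$, which converges precisely because of the hypothesis $\alpha < \lambda$. The outcome is
\begin{equation*}
\int_{B(0,r)} f^p w \le C\, r^{\lambda-\alpha} \|f\|_{\mathcal{L}^{p,\lambda}(|x|^\alpha w)}^p \sum_{k \ge 0} 2^{-k(\lambda-\alpha)},
\end{equation*}
and taking $p$-th roots gives \eqref{basicineq}, with a constant that depends only on $\alpha$, $\lambda$ and $p$.

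There is no real obstacle here; the only thing to be careful about is that the argument must be valid for both positive and negative $\alpha$ (the lemma is stated only with $\alpha < \lambda$), which is why the bound $|x|^{-\alpha} \le 2^{|\alpha|}(2^{-k}r)^{-\alpha}$ is written with the absolute value in the exponent. The convergence of the geometric series is exactly where the hypothesis $\alpha < \lambda$ enters, so this condition is sharp in the argument.
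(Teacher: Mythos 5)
Your proof is correct and follows essentially the same route as the paper: a dyadic decomposition of $B(0,r)$ into annuli centered at the origin, the observation that $|x|^{-\alpha}$ is comparable to $(2^{-k}r)^{-\alpha}$ on each annulus (valid for either sign of $\alpha$), and a geometric series that converges exactly when $\alpha<\lambda$. The only difference is cosmetic indexing of the annuli.
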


\begin{proof}
Let $A_j=B(0, 2^{-j+1}r)\setminus  B(0, 2^{-j}r)$, $j\in \N$. Then
\begin{align*}
\int_{B(0,r)}f^p w&\le C\sum_{j=1}^\infty \int_{A_j}f(y)^p w(y)\left(\frac{|y|}{2^{-j}r}\right)^\alpha dy
\\
&\le C\sum_{j=1}^\infty (2^{-j}r)^{\lambda-\alpha} \|f\|^p_{\mathcal L^{p,\lambda}(|x|^\alpha w)}.
\end{align*}
If $\alpha<\lambda$ the series is convergent and \eqref{basicineq} follows.
\end{proof}

\begin{remark}
 We deal with integrals of the type $\int_{B} |f|^pw$ where $w$ is a certain $A_p$ weight and $B$ is a ball. This can be written as $\int |f|^pw\chi_{B}$. But $w\chi_{B}$ cannot be an $A_p$ weight for any $p$ because $A_p$-weights cannot vanish in a set of positive measure. Hence, the second proof of Theorem 6.1 of \cite{Ad15} is not correct, because it claims that $r^{\lambda-n}\chi_{B(x,r)}$ is an $A_1$ weight.
\end{remark}

\section{Main theorems}\label{sec3}

In this section we first prove Theorem \ref{teo1}, starting with the assumption for $p_0=1$. This case is important because the proof of the theorem is simpler and the general case $p_0\in (1,\infty)$ can be reduced to this one by a scaling argument. In the applications it is also significant because for a number of operators weighted weak-type $(1,1)$ estimates are known and our result provides weak-type Morrey estimates for them.

\begin{proof}[Proof of Theorem \ref{teo1}] 
\textit{Case $p_0=1$}. We assume first that \eqref{hyp1} holds with $p_0=1$. 

Let $B_r:=B(x,r)$ be one of the balls considered in Remark \ref{select}. Let $w\in A_p$. We have
\begin{equation}\label{uno}
\int_{B_r}g(y)^p|y|^\alpha w(y) dy \le 
\begin{cases} 
r^\alpha \displaystyle\int_{B_r}g^p w, &\text{if $x=0$;}\\[10pt]
C|x|^\alpha \displaystyle\int_{B_r}g^p w, &\text{if $0<r\le |x|/4$.}
\end{cases} 
\end{equation}

\textit{Proof for $1<p<\infty$}.
In both cases we are left with the integral of $g^pw$ on the ball $B_r$, which we handle as in \cite{DuRo}. Using duality we have
\begin{equation*}
\left(\int_{B_r} g^{p}w\right)^\frac{1}{p} =\sup_{h\,:\,\| h\|_{L^{p'}(w,B_r)}=1}\int_{B_r} gh w.
	\end{equation*}
Fix such a function $h$ and we have for $s>1$ that
\begin{equation} \label{p2}
\int_{B_r} gh w  \le  \int_{\rn} gM(h^s w^s \chi_{B_r})^\frac{1}{s}
 \le c \int_{\rn} fM(h^s w^s \chi_{B_r})^\frac{1}{s},  
\end{equation}
provided that $M(h^s w^s \chi_{B_r})^\frac{1}{s}\in A_1$. According to Remark \ref{propap} (i) we need $M(h^s w^s \chi_{B_r})(x)<\infty$ a.e. We check that $h^s w^s \chi_{B_r}\in L^1$ for appropriate $s>1$, and get a bound for its integral for future use. To this end, we choose $s>1$ such that $w^{1-p'}\in A_{p'/s}$, which is possible because $w^{1-p'}\in A_{p'}$. We have
\begin{align} 
\begin{split} \label{p3}
	 \left(\int_{B_r} h^s w^{s-1} w\right)^\frac{1}{s}
	&\le\left(\int_{B_r} h^{p'} w\right)^\frac{1}{p'}
	\left(\int_{B_r} w^{(s-1) \frac{p'}{p'-s} +1} \right)^{\frac1{s}-\frac{1}{p'}}\\ 
	& \le c\  |B_r|^{\frac{1}{s}}w^{1- p'}(B_r)^{-\frac{1}{p'}}
		\le c\ w(B_r)^{\frac{1}{p}}r^{-\frac{n}{s'}},
\end{split} 
\end{align}
where the second inequality holds because $w^{1-p'}\in A_{p'/s}$ (the exponent of $w$ in the integral is the same as $(1-p')(1-(p'/s)')$) and in the last one we use 
\begin{equation*}
c_nr^n=|B_r|\le w(B_r)^{\frac{1}{p}} w^{1-p'}(B_r)^{\frac{1}{p'}}.
\end{equation*}

We split the last integral of \eqref{p2} into the integral over $B_{2r}$ and over its complement.
On the one side we have 
\begin{equation*} 
\int_{B_{2r}} f  M(h^s w^s \chi_{B_r})^\frac{1}{s} \le 
	\left(\int_{B_{2r}} f^{p}w\right)^{\frac1{p}}\left(\int_{B_{2r}} M(h^s w^s \chi_{B_{r}})^{\frac{p'}s}w^{1- p'}\right)^{\frac 1{p'}}.
\end{equation*}
The last term is bounded by a constant because  $M$ is bounded on $L^{p'/s}(w^{1-p'})$ and we get a constant times the norm of $h$ in $L^{p'}(w, B_r)$, which is $1$. 
Now we have
\begin{equation}\label{unobis}
\int_{B_{2r}} f^{p}w\le \begin{cases} 
C (2r)^{\lambda-\alpha} \|f\|^p_{\mathcal L^{p,\lambda}(|x|^\alpha w)}, &\text{if $x=0$;}\\[10pt]
C|x|^{-\alpha}(2r)^{\lambda} \|f\|^p_{\mathcal L^{p,\lambda}(|x|^\alpha w)}, &\text{if $0<r\le |x|/4$;}
\end{cases} 
\end{equation}
where in the first case we use \eqref{basicineq}, and in the second case we use $|x|\sim |y|$ for $y\in B_{2r}$.

To deal with the integral on $\rn\setminus B_{2r}$ we decompose it into annuli and use that on $B_{2^{j+1}r}\setminus B_{2^{j}r}$ the maximal operator is  comparable to $(2^j r)^{-n}\int_{B_r} h^sw^s$. Using \eqref{p3} we get 
\begin{equation}\label{dos}
\aligned
\sum_{j=1}^\infty &\int_{B_{2^{j+1}r}\setminus B_{2^{j}r}} f 
\  \frac{w(B_r)^{\frac 1 {p}}}{2^{jn/s}r^n}\\
& \le
\sum_{j=1}^\infty \left(\int_{B_{2^{j+1}r}\setminus B_{2^{j}r}} f^{p} w \right)^\frac{1}{p}  \frac{w^{1- p'} \left(B_{2^{j+1}r}\right)^{\frac 1{p'}}w(B_r)^{\frac 1{p}}}{2^{jn/s}r^n}\\
& \le\sum_{j=1}^\infty \left(\int_{B_{2^{j+1}r}} f^{p} w \right)^\frac{1}{p} 2^{\frac{jn}{s'}} 2^{-\frac{jn}{p\sigma'}}, 
\endaligned
\end{equation}
where in the last step we use
\begin{equation*}
w\left(B_{2^{j+1}r} \right)^{\frac 1{p}} w^{1-p'} \left(B_{2^{j+1}r}\right)^{\frac 1{ p'}}\le C |B_{2^{j+1}r}|=C'2^{jn}r^n,
\end{equation*}
and \eqref{ineqRH} for $B_r$ and $B_{2^{j+1}r}$.

If $B_r$ is centered at the origin, we use \eqref{basicineq} in the last term of \eqref{dos} to get
\begin{equation}\label{tres}
C\sum_{j=1}^\infty 2^{j[(\lambda-\alpha-\frac{n}{\sigma'})\frac{1}{p}+\frac{n}{s'}]}r^{(\lambda-\alpha)\frac{1}{p}}\|f\|_{\mathcal L^{p,\lambda}(|x|^\alpha w)}.
\end{equation}
The series is convergent if we choose $s$ close enough to $1$, because we assume $\lambda<{n}/{\sigma'}$.

If the ball is centered at $x$ and $r\le |x|/4$, we can assume $|x|=2^{N}r$ for some $N\ge 2$, increasing slightly $r$ if necessary. For the integral in \eqref{dos} we distinguish the cases $j\le N-2$ and $j\ge N-1$. In the first case, if $y\in B_{2^{j+1}r}$, then $|y|\sim |x|$. In the second case, $B_{2^{j+1}r}\subset B(0, 2^{j+2}r)$. As a consequence, for $j\le N-2$, 
\begin{equation*}
\aligned
\int_{B_{2^{j+1}r}} f^{p} w&\le C |x|^{-\alpha} \int_{B_{2^{j+1}r}} f(y)^{p} w(y) |y|^\alpha dy\\
&\le C |x|^{-\alpha}(2^jr)^\lambda \|f\|^p_{\mathcal L^{p,\lambda}(|x|^\alpha w)}.
\endaligned
\end{equation*}
For $j\ge N-1$ we use \eqref{basicineq} and we obtain
\begin{equation*}
\int_{B_{2^{j+1}r}} f^{p} w\le \int_{B(0, 2^{j+2}r)} f^{p} w 
\le C (2^{j}r)^{\lambda-\alpha} \|f\|^p_{\mathcal L^{p,\lambda}(|x|^\alpha w)}.
\end{equation*}
Inserting this into \eqref{dos} we obtain a constant times
\begin{equation*}
\|f\|_{\mathcal L^{p,\lambda}(|x|^\alpha w)}\left(\sum_{j=1}^{N-2} |x|^{-\frac{\alpha}p} r^{\frac{\lambda}p}2^{j[(\lambda-\frac{n}{\sigma'})\frac{1}{p}+\frac{n}{s'}]}+\sum_{N-1}^{\infty}r^{(\lambda-\alpha)\frac 1p}2^{j[(\lambda-\alpha-\frac{n}{\sigma'})\frac{1}{p}+\frac{n}{s'}]}\right).
\end{equation*}
The first sum is bounded independently of $N$ if we take $s$ close enough to $1$, because $\lambda<{n}/{\sigma'}$. For this same reason the series is convergent and the value of its sum is
\begin{equation*}
C2^{N[(\lambda-\alpha-\frac{n}{\sigma'})\frac{1}{p}+\frac{n}{s'}]} r^{(\lambda-\alpha)\frac 1p} =C|x|^{-\frac{\alpha}p} r^{\frac{\lambda}p} 2^{N[(\lambda-\frac{n}{\sigma'})\frac 1p+\frac{n}{s'}]}\le C'|x|^{-\frac{\alpha}p} r^{\frac{\lambda}p},
\end{equation*}
where $C'$ is independent of $N$ because the exponent of $2^N$ in the middle term is negative. Taking into account \eqref{unobis}, \eqref{tres} and the recent bound, the right-hand side of \eqref{uno} is bounded as desired.

\textit{Proof for  $p=1$}. The proof is similar but easier because we do not need to use a duality argument and there is no $h$ as in the previous situation. 

We have $w\in A_1\cap RH_\sigma$ (which is the same as $w^\sigma\in A_1$). In the construction of the $A_1$ weight $M(w^s\chi_{B_r})^{1/s}$ we choose $1<s<\sigma$. Then $M(w^s\chi_{B_r})^{1/s}\le M(w^s)^{1/s}\le w$ a.e. and when we integrate on $B_{2r}$ we obtain \eqref{unobis} with $p=1$.

When we are in $B_{2^{j+1}r}\setminus B_{2^{j}r}$ we have 
\begin{equation*}
M(w^s\chi_{B_r})^{1/s}\le C \left(\frac {w^s(B_r)}{(2^jr)^n}\right)^{1/s}.
\end{equation*}
Using that $w\in RH_s$ (because $s<\sigma$), $w^s(B_r)^{1/s}\le C |B_r|^{-1/s'}w(B_r)$. 
Instead of \eqref{dos} we have now
\begin{equation*}
\sum_{j=1}^\infty \int_{B_{2^{j+1}r}\setminus B_{2^{j}r}} f w\  2^{jn/s'} \frac{w(B_r)}{w(B_{2^{j+1}r})}\le \sum_{j=1}^\infty \int_{B_{2^{j+1}r}\setminus B_{2^{j}r}} f w\  2^{j(\frac n{s'}-\frac n{\sigma'})}.
\end{equation*}
The proof continues as before.

\textit{Case $p_0>1$}. By the usual extrapolation theorem the assumption \eqref{hyp1} is valid for any $p_0\in (1,\infty)$. Given $p>1$ and $w\in A_p\cap RH_{\sigma}$, we choose $p_0>1$ for which $w\in A_{p/p_0}$. The assumption holds in the form
\begin{equation*}
\|g^{p_0}\|_{L^{1}(v)}\le C \|f^{p_0}\|_{L^{1}(v)},
\end{equation*}
for $v\in A_1\subset A_{p_0}$ and we can apply the previous part of the proof to the pair $(f^{p_0},g^{p_0})$ to get the Morrey estimate with exponent $p/p_0$.  
\end{proof}

We can generalize Theorem \ref{teo1} to a setting in which weighted inequalities is a restricted range are assumed. 

\begin{theorem}\label{teo2}
Let $1<b< \infty$ and $1\le p_0<b$. Let $\mathcal F$ be a collection of nonnegative measurable pairs of functions. Assume that for every $(f,g)\in \mathcal F$ and every $w\in A_{p_0}\cap RH_{(b/p_0)'}$ we have
\begin{equation}\label{hyp2}
\|g\|_{L^{p_0}(w)}\le C \|f\|_{L^{p_0}(w)},
\end{equation}
where $C$ does not depend on the pair $(f,g)$ and it depends on $w$ only in terms of the $A_{p_0}$ and $RH_{(b/p_0)'}$ constants of $w$. Then if $1<p<b$ and $w\in A_{p}\cap RH_{\sigma}$ with $\sigma> (b/p)'$, it holds that 
 \begin{equation*} 
\|g\|_{\mathcal L^{p,\lambda}(|x|^\alpha w)}\le C \|f\|_{\mathcal L^{p,\lambda}(|x|^\alpha w)},
\end{equation*}
for  $0\le \lambda<n(\frac 1{\sigma'}-\frac pb)$ and $0\le \alpha<\lambda$. In particular, for power weights of the form $|x|^\beta=|x|^\alpha w(x)$ and for 
\begin{equation*}
\lambda-n\left(1-\frac pb\right)<\beta<\lambda+n(p-1),
\end{equation*}
it holds that
 \begin{equation*}
\|g\|_{\mathcal L^{p,\lambda}(|x|^\beta)}\le C \|f\|_{\mathcal L^{p,\lambda}(|x|^\beta)}.
\end{equation*}
Moreover, if the hypothesis holds for $p_0=1$, then the results are valid for $p=1$.
\end{theorem}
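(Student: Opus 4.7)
The plan is to follow the proof of Theorem \ref{teo1} step by step, with the refinements forced by the limited-range hypothesis. I will first treat the case $p_0=1$, and then reduce the general $p_0$ case to it by rescaling, exactly as in the proof of Theorem \ref{teo1}.

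For the case $p_0=1$ and $p>1$: given a ball $B_r$ and a test function $h$ with $\|h\|_{L^{p'}(w,B_r)}=1$, I set $v=M(h^s w^s\chi_{B_r})^{1/s}$ and use duality together with the hypothesis to bound $\int_{B_r} ghw\le \int gv\le c\int fv$. Now \eqref{hyp2} at $p_0=1$ requires $v\in A_1\cap RH_{b'}$, so by Remark \ref{propap}(i) one must take $s>b'$ (whereas the proof of Theorem \ref{teo1} only needed $s>1$). The integrability of $h^s w^s\chi_{B_r}$ in \eqref{p3} still forces $s<p'$, and the interval $(b',p')$ is nonempty precisely because $p<b$. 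For $p=1$, I instead set $v=M(w^s\chi_{B_r})^{1/s}$ with $b'<s<\sigma$; this is possible since $\sigma>b'$, and $w^s\in A_1$ holds because $w\in A_1\cap RH_\sigma\subset A_1\cap RH_s$.

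The decomposition $\int fv=\int_{B_{2r}}fv+\sum_j\int_{B_{2^{j+1}r}\setminus B_{2^jr}}fv$ is handled exactly as in Theorem \ref{teo1}, using \eqref{basicineq} on the near part and the reverse H\"older inequality \eqref{ineqRH} on the annuli. The new point is the convergence of the resulting geometric series, whose $2^j$-exponent is again of the form $\frac{\lambda-\alpha-n/\sigma'}{p}+\frac{n}{s'}$; but now $s$ can only approach $b'$ from above, so $n/s'\to n/b$ rather than $0$. Convergence therefore demands $\lambda-\alpha<n/\sigma'-np/b$, which is guaranteed by the hypothesis $\lambda<n(1/\sigma'-p/b)$ together with $\alpha\ge 0$. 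This is precisely the computation that produces the sharp range, and it is the main technical point.

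For general $p_0>1$ I would invoke the limited-range (Auscher--Martell) extrapolation to recast \eqref{hyp2} as the family of inequalities $\|g\|_{L^q(v)}\le C\|f\|_{L^q(v)}$ for all $q\in(1,b)$ and $v\in A_q\cap RH_{(b/q)'}$. For a weight $w\in A_p\cap RH_\sigma$ with $\sigma>(b/p)'$, openness of $A_p$ allows a $p_0$ slightly larger than $1$ with $w\in A_{p/p_0}\cap RH_\sigma$ and $\sigma>((b/p_0)/(p/p_0))'=(b/p)'$. Applying the already-established $p_0=1$ case to the pair $(f^{p_0},g^{p_0})$ at Morrey exponent $p/p_0$, with $b$ replaced by $b/p_0$, yields a Morrey estimate equivalent to the desired one via $\|g\|_{\mathcal L^{p,\lambda}(u)}^{p_0}=\|g^{p_0}\|_{\mathcal L^{p/p_0,\lambda}(u)}$; the $\lambda$-range is preserved because $n(1/\sigma'-(p/p_0)/(b/p_0))=n(1/\sigma'-p/b)$. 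The case $p=1$ for general $p_0$ follows in the same way.
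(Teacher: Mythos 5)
Your overall strategy coincides with the paper's: rerun the proof of Theorem \ref{teo1} with $s>b'$ in place of $s>1$, check that the geometric series now converges under the stronger restriction $\lambda<n(\frac1{\sigma'}-\frac pb)$ because $n/s'$ can only be pushed down to $n/b$, and reduce $p_0>1$ to $p_0=1$ by limited-range extrapolation and the rescaling $(f,g)\mapsto(f^{p_0},g^{p_0})$. The series computation, the $p=1$ case with $b'<s<\sigma$, and the rescaling step (including the observation that the $\lambda$-range is invariant under replacing $(p,b)$ by $(p/p_0,b/p_0)$) are all handled correctly and exactly as in the paper.

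There is, however, one genuine gap, and it sits at the only place where the proof of Theorem \ref{teo2} needs an ingredient beyond Theorem \ref{teo1}. You assert that the constraints on $s$ are just $s>b'$ (so that $M(h^sw^s\chi_{B_r})^{1/s}\in A_1\cap RH_{b'}$) and $s<p'$ (for the H\"older step in \eqref{p3}), and that nonemptiness of $(b',p')$ settles the matter. But the second inequality in \eqref{p3} and the boundedness of $M$ on $L^{p'/s}(w^{1-p'})$ require the weight condition $w^{1-p'}\in A_{p'/s}$, which is strictly stronger than $s<p'$ and fails for $s$ near $p'$ for a generic $w\in A_p$. In Theorem \ref{teo1} a valid $s>1$ exists simply because $w^{1-p'}\in A_{p'}$ and the $A_q$ classes are open; here you need some $s>b'$ with $w^{1-p'}\in A_{p'/s}$, i.e.\ you need $w^{1-p'}\in A_{p'/b'}$, and this is precisely what the hypothesis $w\in A_p\cap RH_{(b/p)'}$ buys. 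The paper establishes it via \eqref{apandrh}: $w\in A_p\cap RH_{(b/p)'}$ is equivalent to $w^{(b/p)'}\in A_q$ with $q=(b/p)'(p-1)+1$, and dualizing (one checks $q'=p'/b'$ and $(b/p)'(1-q')=1-p'$) gives $w^{1-p'}\in A_{p'/b'}$. Without this step your choice of $s$ is unjustified, and the reverse H\"older hypothesis on $w$ would enter the argument only through the series, which is not enough. A minor further point: your closing sentence about ``the case $p=1$ for general $p_0$'' overreaches; the theorem claims $p=1$ only under the $p_0=1$ hypothesis, and the rescaling cannot produce it from $p_0>1$ since $p/p_0<1$.
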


\begin{proof}
 The proof is similar to that of Theorem \ref{teo1}. Starting with $p_0=1$, to apply the hypothesis in \eqref{p2} we need to assume $s>b'$ because the weight has to be in $A_1\cap RH_{b'}$. The estimate \eqref{p3} and the boundedness of $M$ in $L^{p'/s}(w^{1-p'})$ need $w^{1-p'}\in A_{p'/s}$. We are assuming that $w\in A_{p}\cap RH_{\sigma}$ with $\sigma> (b/p)'$, in particular, $w\in A_{p}\cap RH_{(b/p)'}$. According to \eqref{apandrh} this is the same as saying $w^{(b/p)'}\in A_{(b/p)'(p-1)+1}$, which by duality yields $w^{(b/p)'(1-q')}\in A_{q'}$ with $q=(b/p)'(p-1)+1$, that is, $w^{1-p'}\in A_{p'/b'}$. Then there exists $s>b'$ for which $w^{1-p'}\in A_{p'/s}$ as needed. The proof continues as before, and we only need to add the condition that makes the series convergent. This condition is   $\lambda<n(\frac 1{\sigma'}-\frac pb)$. 
 
If we assume $p_0>1$ in \eqref{hyp2}, by the usual extrapolation theorem we can consider any $p_0\in (1,b)$. Given $p$ and $w$ we proceed again as before by choosing $p_0$ close enough to $1$ such that $w\in A_{p/p_0}$ and working with the pairs $(f^{p_0}, g^{p_0})$. 
\end{proof}

The formulation of the extrapolation theorem in terms of pairs of functions provides several extensions as corollaries (see \cite[p. 21--22]{CMP11}). In a similar way, we can get similar extensions in the Morrey setting. We state the scaling and weak-type extensions in the following two corollaries, and leave to the interested reader the extension to the  vector-valued setting.

\begin{corollary}\label{cor3}
Let $0<p_-\le p_0<p_+\le \infty$. Let $\mathcal F$ be a collection of nonnegative measurable pairs of functions. Assume that for every $(f,g)\in \mathcal F$ and every $w\in A_{\frac{p_0}{p_-}}\cap RH_{\left(\frac{p_+}{p_0}\right)'}$ we have
\begin{equation}\label{hyp3}
\|g\|_{L^{p_0}(w)}\le C \|f\|_{L^{p_0}(w)},
\end{equation}
where $C$ does not depend on the pair $(f,g)$ and it depends on $w$ only in terms of the $A_{\frac{p_0}{p_-}}$ and $RH_{\left(\frac{p_+}{p_0}\right)'}$ constants of $w$. Then if $p_-<p<p_+$ and $w\in A_{\frac{p}{p_-}}\cap RH_{\sigma}$ with $\sigma> {\left(\frac{p_+}{p}\right)'}$, it holds that 
 \begin{equation*} 
\|g\|_{\mathcal L^{p,\lambda}(|x|^\alpha w)}\le C \|f\|_{\mathcal L^{p,\lambda}(|x|^\alpha w)},
\end{equation*}
for  $0\le \lambda<n(\frac 1{\sigma'}-\frac {p}{p_+})$ and $0\le \alpha<\lambda$. In particular, for power weights of the form $|x|^\beta=|x|^\alpha w(x)$ and for 
\begin{equation*}
\lambda-n\left(1-\frac {p}{p_+}\right)<\beta<\lambda+n(\frac {p}{p_-}-1),
\end{equation*}
it holds that
 \begin{equation*}
\|g\|_{\mathcal L^{p,\lambda}(|x|^\beta)}\le C \|f\|_{\mathcal L^{p,\lambda}(|x|^\beta)}.
\end{equation*}
Moreover, if the hypothesis holds for $p_0=p_-$, then the conclusion is valid for $p=p_-$.
\end{corollary}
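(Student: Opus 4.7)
The plan is to reduce the corollary to Theorem \ref{teo2} (or Theorem \ref{teo1} if $p_+=\infty$) by the standard scaling device that is available because the hypotheses and the conclusion are formulated for arbitrary collections of pairs. Introduce the rescaled family $\widetilde{\mathcal F}=\{(f^{p_-},g^{p_-}):(f,g)\in\mathcal F\}$. For a pair $(F,G)=(f^{p_-},g^{p_-})$ one has $\|G\|_{L^{q}(w)}=\|g\|_{L^{qp_-}(w)}^{p_-}$, and analogously for $F$. Setting $\widetilde p_0:=p_0/p_-\in[1,p_+/p_-)$ and $\widetilde b:=p_+/p_-$, the assumption \eqref{hyp3} becomes exactly the hypothesis \eqref{hyp2} of Theorem \ref{teo2} for $\widetilde{\mathcal F}$ at exponent $\widetilde p_0$ and with $b$ replaced by $\widetilde b$, since
\[
\bigl(\widetilde b/\widetilde p_0\bigr)'=\bigl(p_+/p_0\bigr)',\qquad A_{\widetilde p_0}\cap RH_{(\widetilde b/\widetilde p_0)'}=A_{p_0/p_-}\cap RH_{(p_+/p_0)'}.
\]

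Now I apply Theorem \ref{teo2} to $\widetilde{\mathcal F}$ with parameters $(\widetilde p_0,\widetilde b)$. This gives, for every $1<\widetilde p<\widetilde b$ and every $w\in A_{\widetilde p}\cap RH_\sigma$ with $\sigma>(\widetilde b/\widetilde p)'$,
\[
\|G\|_{\mathcal L^{\widetilde p,\lambda}(|x|^\alpha w)}\le C\,\|F\|_{\mathcal L^{\widetilde p,\lambda}(|x|^\alpha w)},\qquad 0\le\lambda<n\bigl(\tfrac1{\sigma'}-\tfrac{\widetilde p}{\widetilde b}\bigr),\ 0\le\alpha<\lambda.
\]
Putting $\widetilde p=p/p_-$, the ranges $\widetilde p\in(1,\widetilde b)$, the weight class, and the exponent $\widetilde p/\widetilde b=p/p_+$ translate verbatim into the hypotheses of the corollary. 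Finally, because
\[
\|g^{p_-}\|_{\mathcal L^{\widetilde p,\lambda}(u)}^{\widetilde p}=\sup_{B(x,r)}\frac1{r^\lambda}\int_{B(x,r)}g^{p_-\widetilde p}\,u=\|g\|_{\mathcal L^{p,\lambda}(u)}^{p},
\]
taking $p_-$-th roots of the Morrey inequality for $(F,G)$ recovers the desired inequality for $(f,g)$. The case $p_0=p_-$ (and hence the $p=p_-$ conclusion) corresponds to $\widetilde p_0=1$ and is handled by invoking the $p_0=1$ branch of Theorem \ref{teo2}. When $p_+=\infty$ there is no reverse Hölder constraint on the admissible weights and the bound on $\lambda$ reduces to $\lambda<n/\sigma'$, so Theorem \ref{teo1} applies in place of Theorem \ref{teo2} and the same scaling argument closes the proof.

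The power-weight range follows from the abstract statement by optimizing the decomposition $|x|^\beta=|x|^\alpha w(x)$ with $w=|x|^{\beta-\alpha}$. For such $w$ to lie in $A_{p/p_-}\cap RH_\sigma$ with $\sigma$ satisfying $\sigma>(p_+/p)'$ and the $\lambda$-restriction $\lambda<n(1/\sigma'-p/p_+)$, one chooses $\sigma$ large (for the lower range of $\beta$) or close to $(p_+/p)'$ (for the upper range), and $\alpha\in[0,\lambda)$ accordingly; the extreme admissible values of $\beta-\alpha$ together with $\alpha\in[0,\lambda)$ yield $\lambda-n(1-p/p_+)<\beta<\lambda+n(p/p_--1)$, exactly as in the analogous derivations for Theorems \ref{teo1} and \ref{teo2}.

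There is no real obstacle: the whole proof is a bookkeeping exercise that exploits the compatibility between raising to the power $p_-$ and the two indices that delimit the Muckenhoupt–reverse Hölder window, namely $p_0/p_-$ and $(p_+/p_0)'$. The only points that need care are matching the $\sigma$-condition and the $\lambda$-condition after the rescaling, and verifying that the Morrey norms behave correctly under the substitution $g\mapsto g^{p_-}$, both of which are immediate from the identities displayed above.
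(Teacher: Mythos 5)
Your proof is correct and follows essentially the same route as the paper: the paper likewise reduces the corollary to Theorem \ref{teo2} (or Theorem \ref{teo1} when $p_+=\infty$) by reading \eqref{hyp3} as $\|g^{p_-}\|_{L^{\widetilde{p_0}}(w)}\le C\|f^{p_-}\|_{L^{\widetilde{p_0}}(w)}$ with $\widetilde{p_0}=p_0/p_-$ and $b=p_+/p_-$. Your version merely spells out the bookkeeping (the identities for the rescaled Lebesgue and Morrey norms and the translation of the index ranges) that the paper leaves implicit; note also that the power-weight range is already delivered directly by the power-weight clause of Theorem \ref{teo2} at the rescaled exponent, so your final optimization paragraph is not needed.
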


If $p_+=\infty$ this is a corollary to Theorem \ref{teo1} and if $p_+<\infty$ to Theorem \ref{teo2}. The proof is immediate from the theorems if we read \eqref{hyp3} as
\begin{equation*}
\|g^{p_-}\|_{L^{\widetilde{p_0}}(w)}\le C \|f^{p_-}\|_{L^{\widetilde{p_0}}(w)},
\end{equation*}
for every $w\in A_{\widetilde{p_0}}\cap RH_{(b/\widetilde{p_0})'}$ with $\widetilde{p_0}=\frac{p_0}{p_-}$ and $b=\frac{p_+}{p_-}$. 

\begin{corollary}
If in Theorem \ref{teo1}, Theorem \ref{teo2} or Corollary \ref{cor3} the assumptions hold as weak-type inequalities, that is,  with $\|g\|_{L^{p_0,\infty}(w)}$ instead of \break $\|g\|_{L^{p_0}(w)}$, then the conclusions also hold in the weak sense, that is, with  $\|g\|_{W\mathcal L^{p,\lambda}(|x|^\alpha w)}$ instead of $\|g\|_{\mathcal L^{p,\lambda}(|x|^\alpha w)}$.
\end{corollary}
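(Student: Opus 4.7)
The plan is to reduce the weak-type statement to the strong-type theorems already proved, by the standard linearization trick that converts a weak-$L^{p_0}$ bound into a strong one on an enlarged family. Given a family $\mathcal F$ satisfying the weak-type version of the hypothesis, I would introduce
\[
\widetilde{\mathcal F} := \bigl\{(f,\, t\chi_{\{g>t\}}) : (f,g) \in \mathcal F, \; t > 0\bigr\}.
\]
For any weight $w$ in the relevant Muckenhoupt or Muckenhoupt–reverse Hölder class, the weak-type assumption gives
\[
\|t\chi_{\{g>t\}}\|_{L^{p_0}(w)} = t\, w(\{g>t\})^{1/p_0} \le \|g\|_{L^{p_0,\infty}(w)} \le C\,\|f\|_{L^{p_0}(w)},
\]
and, crucially, $C$ is independent of $t$ because the quantitative dependence in Theorems \ref{teo1}, \ref{teo2} and Corollary \ref{cor3} is only on $[w]_{A_{p_0}}$ (and, where relevant, the reverse Hölder constant) and on the hypothesis constant, none of which involve $t$. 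Therefore $\widetilde{\mathcal F}$ satisfies the corresponding \emph{strong-type} hypothesis with a constant uniform in $t$.

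Applying the strong-type theorem to $\widetilde{\mathcal F}$ yields, for each fixed $t > 0$,
\[
\|t\chi_{\{g>t\}}\|_{\mathcal L^{p,\lambda}(|x|^\alpha w)} \le C\,\|f\|_{\mathcal L^{p,\lambda}(|x|^\alpha w)},
\]
with $C$ again independent of $t$. Unfolding the left-hand side,
\[
\|t\chi_{\{g>t\}}\|_{\mathcal L^{p,\lambda}(|x|^\alpha w)} = \sup_{x \in \rn,\, r > 0} \left(\frac{t^p\,(|x|^\alpha w)(\{y \in B(x,r) : g(y) > t\})}{r^\lambda}\right)^{1/p}.
\]
Taking the supremum over $t > 0$ (permissible since the right-hand side does not see $t$) and interchanging the suprema on the left, the left-hand side is exactly $\|g\|_{W\mathcal L^{p,\lambda}(|x|^\alpha w)}$, which delivers the weak-type Morrey bound.

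I do not anticipate any genuine obstacle: the only technical point is uniformity of the constant in $t$ when invoking the strong-type theorems, and this is precisely the kind of quantitative control that has been set up in the statements of Theorems \ref{teo1} and \ref{teo2}. The same argument covers Corollary \ref{cor3} verbatim by rewriting the weak assumption for the pair $(f^{p_-}, g^{p_-})$ exactly as in the scaling step used in its proof.
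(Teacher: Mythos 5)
Your proposal is correct and is exactly the paper's argument: the paper proves this corollary in one line by reading the weak-type hypothesis as a strong-type inequality for the pairs $(f, t\chi_{\{g>t\}})$ with constants uniform in $t$, which is precisely your linearization. Your write-up simply spells out the details of unfolding the Morrey norm of $t\chi_{\{g>t\}}$ and taking the supremum in $t$, which is a correct and complete elaboration of the same route.
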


To prove this case, the weak-type hypothesis can be read as a strong type inequality for the pair $(f, t\chi_{\{g>t\}})$ with constants uniform in $t$. 

\section{Embeddings and applications}\label{sec4new}

The proof of Theorem \ref{teo1} shows that 
\begin{equation*}
\int_{\rn} fM(h^s w^s \chi_{B_r})^\frac{1}{s}\le C \|f\|_{\mathcal L^{p,\lambda}(|x|^\alpha w)},
\end{equation*}
for $0\le \lambda<n$, $w\in A_p$ and appropriate $s>1$. This implies the continuous embedding 
\begin{equation*}
\mathcal L^{p,\lambda}(|x|^\alpha w)\hookrightarrow L^1(M(h^s w^s \chi_{B_r})^\frac{1}{s}). 
\end{equation*}
In particular, choosing the ball $B(0,1)$ and $h=cw^{-1}$, we have  \break $\mathcal L^{p,\lambda}(|x|^\alpha w)\hookrightarrow L^1((1+|x|)^{-\beta})$ for some $\beta<n$. Since $M(h^s w^s \chi_{B_r})^\frac{1}{s}\in A_1$, we have 
\begin{equation*}
\mathcal L^{p,\lambda}(|x|^\alpha w)\subset \bigcup_{u\in A_1} L^1(u).
\end{equation*}
By the scaling argument at the end of the proof of the same theorem, if $p>1$, 
\begin{equation}\label{emb2}
\mathcal L^{p,\lambda}(|x|^\alpha w)\subset \bigcup_{u\in A_q} L^q(u), \quad q>1.
\end{equation}
(The right-hand side is independent of $q$; see \cite{KMM16} and \cite{DuRo}.) Suitable embeddings can be written for the weighted spaces appearing in Theorem \ref{teo2}, hence in Corollary \ref{cor3}.  

The applications to boundedness of operators in Morrey spaces are corollaries of the general theorems and of the embeddings just mentioned. For instance, the basic one is the following.

\begin{corollary}\label{korol}
Assume that for some $p_0\in [1,\infty)$, $T$ is an operator acting from $\bigcup_{w\in A_{p_0}} L^{p_0}(w)$ into the space of measurable functions and satisfying
\begin{equation} \label{ex1}
\|Tf\|_{L^{p_0}(w)}\le C \|f\|_{L^{p_0}(w)} 
\end{equation}
 for all $f\in L^{p_0}(w)$ and $w\in A_{p_0}$, with a constant depending on $[w]_{A_{p_0}}$.
Then for every
$1< p<\infty$ (and also $p=1$ if $p_0=1$), $w\in A_{p}\cap RH_\sigma$, $0\le \lambda<n/\sigma'$ and $0<\alpha<\lambda$,  we have
that $T$ is well defined on $\mathcal L^{p,\lambda}(|x|^\alpha w)$ by restriction and, moreover,
 \begin{equation}\label{bound13}
\|Tf\|_{\mathcal L^{p,\lambda}(|x|^\alpha w)}\le C \|f\|_{\mathcal L^{p,\lambda}(|x|^\alpha w)}.
\end{equation}
For power weights $|x|^\beta$, $T$ is well defined and bounded on $\mathcal L^{p,\lambda}(|x|^\beta)$ if $\lambda-n<\beta<\lambda+n(p-1)$.

If \eqref{ex1} is replaced by the weak estimate from $L^1(w)$ to $L^{1,\infty}(w)$ for $w\in A_1$, then \eqref{bound13} holds from $\mathcal L^{1,\lambda}(|x|^\alpha w)$ to $W\mathcal L^{1,\lambda}(|x|^\alpha w)$.
\end{corollary}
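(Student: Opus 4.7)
The plan is to combine the embedding \eqref{emb2} (which makes $Tf$ meaningful on the Morrey space) with Theorem \ref{teo1} (which does the analytic work of upgrading a weighted $L^{p_0}(w)$ bound to a weighted Morrey bound). The main obstacle is the first point: without the embedding the inequality \eqref{bound13} is formally empty, because $T$ is only given as an operator on the spaces $L^{p_0}(w)$, and one must first check that every $f$ in the Morrey space actually belongs to some $L^{p_0}(u)$ with $u\in A_{p_0}$.

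For the well-definedness by restriction, take $f\in \mathcal L^{p,\lambda}(|x|^\alpha w)$. Using the embedding $\mathcal L^{p,\lambda}(|x|^\alpha w)\subset \bigcup_{u\in A_1} L^1(u)$ stated just before \eqref{emb2} when $p_0=1$, or \eqref{emb2} itself with $q=p_0$ when $p_0>1$, one finds $u\in A_{p_0}$ with $f\in L^{p_0}(u)$. The assumption \eqref{ex1} then provides the measurable function $Tf$, which is the desired restriction; if $f$ happens to lie in two different $L^{p_0}(u_i)$ spaces, the two resulting values of $Tf$ coincide almost everywhere by standard consistency of $T$ on its $L^{p_0}$ domains.

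For the strong Morrey estimate, consider the collection
$$
\mathcal F=\bigl\{\,(|f|,|Tf|)\ :\ f\in \bigcup\nolimits_{w\in A_{p_0}} L^{p_0}(w)\,\bigr\}.
$$
Assumption \eqref{ex1} is exactly the hypothesis \eqref{hyp1} of Theorem \ref{teo1} for $\mathcal F$, with constants depending on $w$ only through $[w]_{A_{p_0}}$. Applying Theorem \ref{teo1} to $\mathcal F$ yields \eqref{bound13} in the claimed range of $p$, $\lambda$, $\alpha$, and $w$; the case $p=1$ is obtained the same way when $p_0=1$. The power weight conclusion $\lambda-n<\beta<\lambda+n(p-1)$ is immediate from the corresponding statement of Theorem \ref{teo1} via the decomposition $|x|^\beta=|x|^\alpha w(x)$.

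Finally, the weak-type assertion is obtained in exactly the same manner, invoking the weak-type corollary stated immediately after Corollary \ref{cor3} in place of Theorem \ref{teo1} (equivalently, by applying Theorem \ref{teo1} to the pairs $(|f|,t\chi_{\{|Tf|>t\}})$ with constants uniform in $t>0$). Apart from the well-definedness step, the whole argument is a mechanical specialization of the extrapolation-type theorems already proved.
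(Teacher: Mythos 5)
Your proposal is correct and follows exactly the paper's route: the paper's proof is the single sentence that well-definedness follows from the embedding \eqref{emb2} and the size estimate from Theorem \ref{teo1}, which is precisely what you spell out (including the choice $q=p_0$ in \eqref{emb2} and the weak-type variant via the corollary after Corollary \ref{cor3}). No gaps; you have simply made explicit the details the paper leaves implicit.
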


The definition by embedding is guaranteed by \eqref{emb2} and the size estimate by Theorem \ref{teo1}. 

There are many operators satisfying the assumptions of the theorem: the Hardy-Littlewood maximal operator, Calder\'on-Zygmund operators, rough operators with kernel $|x|^{-n}\Omega(x/|x|)$ with $\Omega\in L^\infty(S^{n-1})$ and integral zero, commutators (in this case the weighted weak-type $(1,1)$ does not hold), square functions (including some Littlewood-Paley type operators, Lusin area integral, $g_\lambda$ functions, Marcinkiewicz integral), etc. 

Similar corollaries can be written for our other general results.  All the applications mentioned in \cite{DuRo} for the spaces $\mathcal L^{p,\lambda}(w)$ are now extended to $\mathcal L^{p,\lambda}(|x|^\alpha w)$ with $0\le\alpha<\lambda$ by the theorems in this paper.  Note that in \cite{DuRo} the space $\mathcal L^{p,\nu}(w)$ was denoted as $L_p^r(\lambda,w,\mathbb R^n)$ with $n+rp=\nu$.

\section{Necessary conditions for $M$ and $H$, and the endpoint for $M$}\label{sec4}

Tanaka proved in \cite{Ta15} that $M$ is bounded on $L^p(|x|^\beta)$ if and only if $\lambda-n\le \beta<\lambda+n(p-1)$. The necessity of the upper bound means that $0\le \alpha<\lambda$ in Theorem \ref{teo1} is optimal. Tanaka's proof uses a general necessary condition involving duality in Morrey spaces. Avoiding duality, we prove first a necessary condition in terms of the $A_q$ scale from which the necessity of the upper bound follows, and next the necessity of the lower bound is proved in a direct way. In all cases our necessary conditions are valid for the weak-type $(p,p)$ estimates. The sufficiency for $\lambda-n< \beta<\lambda+n(p-1)$ comes from Theorem \ref{teo1}. We give in Proposition \ref{endp} below a direct proof of the boundedness at the endpoint $\mathcal L^{p,\lambda}(|x|^{\lambda-n})$ for $1<p<\infty$ and the corresponding weak estimate for $p=1$. The weak estimates are not in \cite{Ta15}.

\begin{lemma}\label{emb5}
Let $1\le p<\infty$ and $0\le \lambda <n$.  The embedding $L^{\frac {pn}{n-\lambda}}(w^{\frac{n}{n-\lambda}})\hookrightarrow \mathcal L^{p,\lambda}(w)$ holds with constant depending only on $n$, $\lambda$ and $p$, not on $w$.
\end{lemma}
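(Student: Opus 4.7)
The plan is a direct application of H\"older's inequality. Set $q=\frac{pn}{n-\lambda}$, so that $q/p=\frac{n}{n-\lambda}$ and its conjugate exponent is $(q/p)'=\frac{n}{\lambda}$. For any ball $B(x,r)$, I would write
\begin{equation*}
\int_{B(x,r)}|f|^p w \;=\; \int_{B(x,r)} \bigl(|f|^p w\bigr)\cdot 1\, dy,
\end{equation*}
and apply H\"older's inequality with exponents $q/p$ and $(q/p)'=n/\lambda$. The first factor becomes
\begin{equation*}
\left(\int_{B(x,r)} |f|^{q} w^{q/p}\right)^{p/q}\le \|f\|_{L^{q}(w^{q/p})}^{p},
\end{equation*}
while the second factor is $|B(x,r)|^{\lambda/n}=c_n^{\lambda/n}\,r^\lambda$, since $1-p/q=\lambda/n$.

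Combining these two bounds gives
\begin{equation*}
\frac{1}{r^{\lambda}}\int_{B(x,r)}|f|^{p}w \;\le\; c_n^{\lambda/n}\,\|f\|_{L^{q}(w^{q/p})}^{p},
\end{equation*}
uniformly in $x$ and $r$. Taking the $p$-th root and then the supremum over all balls yields $\|f\|_{\mathcal L^{p,\lambda}(w)}\le C\|f\|_{L^{q}(w^{q/p})}$ with $C$ depending only on $n$, $\lambda$, $p$.

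There is no real obstacle here: the exponents $q=\frac{pn}{n-\lambda}$ and weight-power $\frac{n}{n-\lambda}$ are precisely calibrated so that H\"older produces exactly the factor $r^{\lambda}$ needed to cancel the normalization in the Morrey norm, and the constant is independent of $w$ because the weight enters only through the first H\"older factor.
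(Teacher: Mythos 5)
Your proof is correct and is essentially identical to the paper's: both apply H\"older's inequality on each ball with exponents $\frac{n}{n-\lambda}$ and $\frac{n}{\lambda}$ to $\int_B(|f|^pw)\cdot 1$, producing the factor $|B|^{\lambda/n}\sim r^\lambda$ that cancels the Morrey normalization. The exponent bookkeeping and the $w$-independence of the constant are handled exactly as in the paper.
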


\begin{proof}
Let $B$ be a ball of radius $r$. Then
\begin{equation*}
\frac 1{r^\lambda}\int_{B}|f|^pw\le \frac 1{r^\lambda}\left(\int_{B}|f|^{\frac{pn}{n-\lambda}}w^{\frac{n}{n-\lambda}}\right)^{1-\frac \lambda n} |B|^{\frac \lambda n}\le c_{n,\lambda} \|f\|_{L^{\frac {pn}{n-\lambda}}(w^{\frac{n}{n-\lambda}})}^p. \qedhere
\end{equation*} 
\end{proof}

\begin{proposition}\label{necrange}
Let $1\le p<\infty$ and $0\le \lambda <n$. If $M$ is bounded from $\mathcal L^{p,\lambda}(w)$ to $W\mathcal L^{p,\lambda}(w)$, then $w\in A_{p+\lambda/n}$. 
\end{proposition}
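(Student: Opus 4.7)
The plan is to adapt the classical test-function argument that shows $M\colon L^p(w)\to L^{p,\infty}(w)$ forces $w\in A_p$, now with the critical exponent shifted to $q:=p+\lambda/n$ to account for the Morrey scale. Fix a ball $B=B(x_0,r)$ and set $\sigma=w^{1-q'}$; after the standard truncation $\sigma_N=\min(\sigma,N)$ followed by $N\to\infty$ (monotone convergence), it suffices to test against $f=\sigma\chi_B$.

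For the lower bound, the pointwise estimate $Mf(y)\ge c_n\,\sigma(B)/|B|$ for every $y\in B$, combined with the weak Morrey quasinorm applied to the ball $B$ itself and the threshold $t=\tfrac12 c_n\,\sigma(B)/|B|$, gives
\begin{equation*}
\|Mf\|_{W\mathcal L^{p,\lambda}(w)}^p \ge c'\,\frac{\sigma(B)^p\,w(B)}{|B|^p\,r^\lambda}.
\end{equation*}

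For the upper bound on $\|f\|_{\mathcal L^{p,\lambda}(w)}$, the plan is to route through Lemma \ref{emb5} rather than estimate the Morrey norm of $\sigma\chi_B$ directly. With $P=pn/(n-\lambda)$ the lemma yields $\|f\|_{\mathcal L^{p,\lambda}(w)}\le c\,\|f\|_{L^P(w^{n/(n-\lambda)})}$. The key algebraic fact is that the exponent of $w$ in the integrand $|f|^P w^{n/(n-\lambda)}$ is $(1-q')P+n/(n-\lambda)$, and this collapses to $1-q'$ precisely when $(q'-1)(P-1)=n/(n-\lambda)$, which is exactly the defining identity of $q=p+\lambda/n$. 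Hence $\int|f|^P w^{n/(n-\lambda)}=\sigma(B)$ and
\begin{equation*}
\|f\|_{\mathcal L^{p,\lambda}(w)}^p\le C\,\sigma(B)^{p/P}=C\,\sigma(B)^{(n-\lambda)/n}.
\end{equation*}

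Combining the two bounds via the assumed weak-type estimate for $M$ and using the identities $p-(n-\lambda)/n=q-1$ and $|B|^p r^\lambda\sim|B|^q$ produces $\sigma(B)^{q-1}\,w(B)\le C\,|B|^q$, so $[w]_{A_q}<\infty$ with $q=p+\lambda/n$. The main obstacle is to spot the correct pairing between the test function $\sigma=w^{1-q'}$ (shifted from the classical $w^{1-p'}$) and the Morrey--Lebesgue embedding of Lemma \ref{emb5}, so that the two Morrey exponent accounts close up simultaneously; once those choices are made the remaining bookkeeping is routine, and only the weak version of the boundedness hypothesis is used because only the lower bound on $Mf$ over $B$ enters.
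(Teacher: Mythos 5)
Your proposal is correct and follows essentially the same route as the paper: test on $f=\sigma\chi_B$ with $\sigma=w^{1-q'}$ for $q=p+\lambda/n$ (the paper writes this choice as $\sigma^{1-p-\lambda/n}=w$), bound $Mf$ from below by $\sigma(B)/|B|$ on $B$, and control $\|\sigma\chi_B\|_{\mathcal L^{p,\lambda}(w)}$ via Lemma \ref{emb5}, so that the integrand collapses back to $\sigma$. The only cosmetic difference is the regularization (you truncate $\sigma$ from above, the paper perturbs $w$ to $w+\epsilon$), which is immaterial.
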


\begin{proof}
Let $B$ be a ball of radius $r$. Define $f=\sigma \chi_B$ with $\sigma$ nonnegative to be chosen later. For $x\in B$, we have $Mf(x)\ge \sigma(B)/|B|$. If $t<\sigma(B)/|B|$, then $B= \{x\in B: Mf(x)>t\}$. Assuming that $M$ is bounded from $\mathcal L^{p,\lambda}(w)$ to $W\mathcal L^{p,\lambda}(w)$ we have 
\begin{equation*}
\aligned
\frac{t w(B)^{1/p}}{r^{\lambda/p}}\le C \|\sigma \chi_B\|_{\mathcal L^{p,\lambda}(w)}&\le C'  \|\sigma \chi_B\|_{L^{\frac {pn}{n-\lambda}}(w)}\\
&=C'\left(\int_B \sigma^{\frac{pn}{n-\lambda}}w^{\frac{n}{n-\lambda}}\right)^{\frac {n-\lambda}{pn}},
\endaligned
\end{equation*}
where we used Lemma \ref{emb5} in the second  inequality. 
Let $t$ tend to $\sigma(B)/|B|$ and choose $\sigma$ such that $\sigma=\sigma^{\frac{pn}{n-\lambda}}w^{\frac{n}{n-\lambda}}$, that is, $\sigma^{1-p-\frac \lambda n}=w$. We get 
\begin{equation*}
\frac{w(B) \sigma(B)^{p+\frac \lambda n-1}}{|B|^{p+\frac \lambda n}}\le C,
\end{equation*}
with a constant independent of $B$. Therefore, $w\in A_{p+\lambda/n}$. 

To be precise, we do not know a priori that $\sigma(B)$ is finite for the choice $\sigma^{1-p-\frac \lambda n}=w$. As usual, to overcome this problem, we define $\sigma_\epsilon$ by  $\sigma_\epsilon^{1-p-\frac \lambda n}=w+\epsilon$ for $\epsilon>0$ and let $\epsilon$ tend to $0$.  
\end{proof}

\begin{proposition}\label{necrangepower}
Let $1\le p<\infty$ and $0\le \lambda <n$. If $M$ is bounded from $\mathcal L^{p,\lambda}(|x|^\beta)$ to $W\mathcal L^{p,\lambda}(|x|^\beta)$, then $\lambda-n\le \beta<\lambda+n(p-1)$. 
\end{proposition}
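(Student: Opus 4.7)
The plan is to treat the two endpoints separately.

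\emph{Upper bound $\beta<\lambda+n(p-1)$.} This drops out of Proposition~\ref{necrange}: weak boundedness forces $|x|^\beta\in A_{p+\lambda/n}$, and the standard characterization of power weights ($|x|^\gamma\in A_q$ iff $-n<\gamma<n(q-1)$) yields the inequality, together with $\beta>-n$, which guarantees local integrability of $|x|^\beta$ near the origin for use below.

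\emph{Lower bound $\beta\ge\lambda-n$.} I argue by contradiction: suppose $\beta<\lambda-n$ and consider the test function $f(y)=|y|^s\chi_{B(0,1)}(y)$ with $s=(\lambda-n-\beta)/p>0$. The exponent $s$ is chosen so that $ps+\beta+n=\lambda$, which is precisely the balance that places $f$ in the Morrey space. Using Remark~\ref{select}, it suffices to check two types of balls. For a centered ball $B(0,r)$ with $r\le 1$, a polar-coordinate computation gives $\int_{B(0,r)}|f|^p|y|^\beta\,dy=c\,r^{ps+\beta+n}=c\,r^\lambda$; for $r>1$ the integral is bounded and the ratio $r^{-\lambda}$ only helps. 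For an off-center ball $B(x,r)$ with $r<|x|/4$, the comparison $|y|\sim|x|$ on the ball turns the Morrey ratio into a constant times $(r/|x|)^{n-\lambda}$, which is bounded since $n>\lambda$. Thus $\|f\|_{\mathcal L^{p,\lambda}(|x|^\beta)}<\infty$.

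The last step is to apply the weak-type hypothesis on balls shrinking to the origin. A one-line averaging argument over $B(y,3)\supset B(0,1)$ yields $Mf(y)\ge c_1>0$ uniformly for $|y|\le 2$. Fixing $t=c_1/2$, the super-level set satisfies $\{y\in B(0,r):Mf(y)>t\}=B(0,r)$ for every $r\le 2$, so the assumed weak-type inequality gives
\begin{equation*}
c\,t^p\,r^{n+\beta-\lambda}\le\frac{t^p\cdot|x|^\beta(B(0,r))}{r^\lambda}\le C\,\|f\|_{\mathcal L^{p,\lambda}(|x|^\beta)}^p,
\end{equation*}
where I used $\beta>-n$ to evaluate $|x|^\beta(B(0,r))\sim r^{n+\beta}$. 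Since $n+\beta-\lambda<0$, letting $r\to 0^+$ blows up the left-hand side while the right-hand side is finite, a contradiction.

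The delicate point—and the main obstacle—is the dual constraint on the test function: it must vanish at the origin fast enough to lie in $\mathcal L^{p,\lambda}(|x|^\beta)$, yet slowly enough that $Mf$ stays uniformly bounded below on a fixed neighborhood of the origin. The critical exponent $s=(\lambda-n-\beta)/p$ is the unique choice that makes both conditions compatible and leaves the scaling factor $r^{n+\beta-\lambda}$ with a negative exponent, which is exactly what drives the contradiction.
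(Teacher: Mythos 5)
Your proof is correct and follows essentially the same route as the paper: the upper bound comes from Proposition~\ref{necrange} together with the power-weight characterization of $A_{p+\lambda/n}$, and the lower bound from testing the weak-type inequality on small balls $B(0,r)$ against a function whose maximal function is bounded below near the origin. The only real difference is your test function $|y|^s\chi_{B(0,1)}$; the paper uses the simpler $\chi_{B(1,1/2)}$, supported away from the origin, which lies in $\mathcal L^{p,\lambda}(|x|^\beta)$ for every $\beta$ and sidesteps the exponent balancing you describe as the delicate point.
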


\begin{proof}
According to the previous proposition, $|x|^\beta\in A_{p+\lambda/n}$ and this requires $\beta<\lambda+n(p-1)$. 

For the lower bound, first we observe that if the characteristic function of a ball centered at the origin is in $W\mathcal L^{p,\lambda}(|x|^\beta)$, then $\lambda-n\le \beta$. Indeed, let $\delta$ be small and $t<1$. Then for every $x\in B(0,\delta)$, the function is bigger than $t$ at $x$. Since 
\begin{equation*}
\int_{B(0,\delta)} |x|^\beta dx\sim \delta^{\beta+n},
\end{equation*}
we want $\delta^{\beta+n}\le K \delta^{\lambda}$ for small $\delta$. Therefore, $\beta+n\ge \lambda$ is necessary. 

Let $f$ be the characteristic function of the ball centered at $1$ with radius $1/2$. This function is clearly in $\mathcal L^{p,\lambda}(|x|^\beta)$ for any $\beta$ and for $0\le \lambda<n$. The maximal operator acting on $f$ satisfies $Mf(x)\ge c$ for some $c>0$ and $x\in B(0,1)$. Consequently, $Mf\notin W \mathcal L^{p,\lambda}(|x|^\beta)$ for  $\beta< \lambda-n$.   
\end{proof}

A direct proof of the necessity of $\beta<\lambda+n(p-1)$ is obtained as follows. 
For $\beta\ge \lambda+n(p-1)$, the function $|x|^{-n}\chi_{B(0,1)}$ is in $\mathcal L^{p,\lambda}(|x|^\beta)$ and is not locally integrable. Therefore,  $\beta<\lambda+n(p-1)$ is necessary.

\begin{proposition}\label{necrangehilbert}
Let $1\le p<\infty$ and $0\le \lambda <1$. If the Hilbert transform $H$ is bounded from $\mathcal L^{p,\lambda}(w)$ to $W\mathcal L^{p,\lambda}(w)$, then $w\in A_{p+\lambda}$.
\end{proposition}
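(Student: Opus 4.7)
The plan is to mimic the argument of Proposition~\ref{necrange}, replacing the trivial lower bound $Mf(x)\ge\sigma(B)/|B|$ on $B$ with the fact that $Hf$ is pointwise comparable to $\sigma(I)/|I|$ on an adjacent interval. Set $q:=p+\lambda$ and $\sigma:=w^{1-q'}$. For an interval $I=(a,a+r)\subset\R$, let $I^+=(a+r,a+2r)$ and $\bar I:=I\cup I^+$. For $f=\sigma\chi_I$, the elementary computation
\[
Hf(x)=\frac{1}{\pi}\int_I\frac{\sigma(y)}{x-y}\,dy\ \ge\ \frac{\sigma(I)}{2\pi|I|},\qquad x\in I^+,
\]
holds because $0<x-y\le 2|I|$. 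Testing the weak-type Morrey bound against $f$ using the ball $\bar I$ (whose slice $\{y\in\bar I:|Hf(y)|>t\}$ contains $I^+$ for $t<c\sigma(I)/|I|$), together with Lemma~\ref{emb5} in dimension $n=1$, gives
\[
\frac{t\,w(I^+)^{1/p}}{|\bar I|^{\lambda/p}}\ \le\ C\|f\|_{\mathcal L^{p,\lambda}(w)}\ \le\ C'\Bigl(\int_I\sigma^{p/(1-\lambda)}w^{1/(1-\lambda)}\Bigr)^{(1-\lambda)/p}=C'\sigma(I)^{(1-\lambda)/p},
\]
the last equality arising from the choice $\sigma=w^{1-q'}$. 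Letting $t\to c\sigma(I)/|I|$ and raising to the $p$-th power yields the off-diagonal estimate
\[
\sigma(I)^{q-1}\,w(I^+)\ \le\ K|I|^q,
\]
and the left-right symmetric argument, testing with $\sigma\chi_{I^+}$ and using the analogous lower bound for $|Hf|$ on $I$, produces the companion inequality $\sigma(I^+)^{q-1}w(I)\le K|I|^q$.

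To upgrade these off-diagonal inequalities to a genuine $A_q$ condition, I would exploit the free Hölder lower bound $|I|^q\le w(I)\sigma(I)^{q-1}$ (valid whenever $\sigma=w^{1-q'}$). Combined with the two off-diagonal estimates, this forces the doubling relations $w(I)\sim w(I^+)$ and $\sigma(I)\sim\sigma(I^+)$ for any pair of adjacent equal-length intervals. Given an arbitrary interval $J$, split it into equal halves $J=J_1\cup J_2$; applying the off-diagonal estimate with $I=J_1$, $I^+=J_2$ gives $\sigma(J_1)^{q-1}w(J_2)\le K(|J|/2)^q$, and the doubling promotes this to $w(J)\sigma(J)^{q-1}\le C|J|^q$, which is precisely $w\in A_{p+\lambda}$.

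The only delicate points are technical. The a priori integrability of $\sigma=w^{1-q'}$ must be handled by the regularization $\sigma_\varepsilon:=(w+\varepsilon)^{1-q'}$ followed by $\varepsilon\to 0^+$, exactly as at the end of Proposition~\ref{necrange}. The degenerate endpoint $p=1,\lambda=0$ (so $q=1$ and $\sigma=w^{1-q'}$ is not meaningful) is not new content either, as it reduces to the classical one-dimensional characterization that $H:L^1(w)\to L^{1,\infty}(w)$ forces $w\in A_1$.
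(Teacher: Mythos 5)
Your proposal is correct, and its core coincides with the paper's: test the weak-type bound against $\sigma\chi_I$ with $\sigma=w^{1-q'}$, $q=p+\lambda$, use the pointwise lower bound $|H(\sigma\chi_I)|\gtrsim \sigma(I)/|I|$ on the adjacent interval, and control $\|\sigma\chi_I\|_{\mathcal L^{p,\lambda}(w)}$ by Lemma \ref{emb5}. The two arguments diverge only in how they convert the resulting ``off-diagonal'' inequality (which pairs $\sigma(I)$ with the $w$-measure of the adjacent interval) into the $A_{p+\lambda}$ condition on a single interval. The paper first applies the hypothesis to plain characteristic functions to show that $\|\chi_I\|_{W\mathcal L^{p,\lambda}(w)}$ and $\|\chi_{I'}\|_{W\mathcal L^{p,\lambda}(w)}$ are comparable, and then inserts $w(I)|I|^{-\lambda}\le\|\chi_I\|^p_{W\mathcal L^{p,\lambda}(w)}\le C\|\chi_{I'}\|^p_{W\mathcal L^{p,\lambda}(w)}$ directly into the test-function chain, so the $A_q$ quantity for $I$ itself appears in one pass. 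You instead retain the off-diagonal estimate $\sigma(I)^{q-1}w(I^+)\le K|I|^q$, combine it with its mirror image and the free H\"older bound $|I|^q\le w(I)\sigma(I)^{q-1}$ to extract doubling of both $w$ and $\sigma$ across adjacent equal intervals, and then reassemble the $A_q$ condition on an arbitrary interval from its two halves. Both routes are valid; the paper's is slightly shorter because the comparability of the characteristic-function norms does the doubling work implicitly, whereas yours makes the doubling of the measures $w$ and $\sigma$ explicit. Your regularization $\sigma_\varepsilon=(w+\varepsilon)^{1-q'}$ matches what is done at the end of Proposition \ref{necrange}, and your observation that the degenerate case $q=1$ (i.e.\ $p=1$, $\lambda=0$) reduces to the classical $L^1\to L^{1,\infty}$ characterization is a point the paper does not spell out either.
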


\begin{proof}
First we observe that for characteristic functions of sets the norms in $\mathcal L^{p,\lambda}(w)$ and $W\mathcal L^{p,\lambda}(w)$ are the same.

Given an interval $I$,  let $I'$ be the adjacent interval of the same length, placed at the right of $I$. Note that for $x\in I'$, $|H(\chi_{I})(x)|>1/(2\pi)$. Assuming the weak boundedness of $H$ we have
\begin{equation*}
\aligned
\|\chi_{I'}\|_{W\mathcal L^{p,\lambda}(w)}&\le \|2\pi H(\chi_{I})\chi_{I'}\|_{W\mathcal L^{p,\lambda}(w)}\\ 
&\le 2\pi \|H(\chi_{I})\|_{W\mathcal L^{p,\lambda}(w)}\le C\|\chi_{I}\|_{\mathcal L^{p,\lambda}(w)}.
\endaligned
\end{equation*}
Interchanging the role of $I$ and $I'$ we deduce that $\|\chi_{I}\|_{W\mathcal L^{p,\lambda}(w)}$ and \break $\|\chi_{I'}\|_{W\mathcal L^{p,\lambda}(w)}$ are comparable.

For $\sigma$ to be chosen later, we observe that $|H(\sigma \chi_I)(x)|> {\sigma(I)}{(2\pi |I|)^{-1}}$ for $x\in I'$. Also 
\begin{equation*}
\frac {w(I)}{|I|^{\lambda}}\le \|\chi_{I}\|_{W\mathcal L^{p,\lambda}(w)}^p\le C \|\chi_{I'}\|_{W\mathcal L^{p,\lambda}(w)}^p,
\end{equation*}
where the first inequality holds by the definition of the norm. Then
\begin{equation*}
\aligned
\frac{\sigma (I)}{|I|}  \frac{w(I)^{1/p}}{|I|^{\lambda/p}} & \le C \left\|\frac{\sigma (I)}{|I|} \chi_{I'}\right\|_{W\mathcal L^{p,\lambda}(w)}\le C \|2\pi H(\sigma \chi_I)\chi_{I'}\|_{W\mathcal L^{p,\lambda}(w)}\\ 
&\le 2\pi C \|H(\sigma\chi_{I})\|_{W\mathcal L^{p,\lambda}(w)}\le C'\|\sigma\chi_{I}\|_{\mathcal L^{p,\lambda}(w)}.
\endaligned
\end{equation*}
Using Lemma \ref{emb5} we get
\begin{equation*}
\frac{\sigma (I)}{|I|}  \frac{w(I)^{1/p}}{|I|^{\lambda/p}}\le C'' \left(\int_I \sigma^{\frac p{1-\lambda}} w^{\frac 1{1-\lambda}}\right)^{\frac {1-\lambda}p},
\end{equation*}
from which $w\in A_{p+\lambda}$ follows if we choose $\sigma=\sigma^{\frac p{1-\lambda}} w^{\frac 1{1-\lambda}}$.  
\end{proof}

\begin{remark}
In the case of the Hilbert transform one could prefer to assume that it is defined a priori only for Schwartz functions through the principal value formula. The proof given here can be adapted to such assumption by approximating the involved functions by smooth ones. Without affecting the proof one can take the intervals $I$ and $I'$ separated by a distance equal to their length, instead of taking them adjacent, so that there is some room for the approximation.   
\end{remark}

The proof of Proposition \ref{necrangehilbert} can be adapted to higher dimensions to obtain the necessity of the condition $A_{p+\lambda/n}$ for the Riesz transforms and other singular integral operators satisfying a nondegeneracy condition (see \cite[Chapter V, \S 4.6]{Ste93} for the similar result in the Lebesgue setting).

\begin{proposition}\label{necrangepowerh}
Let $1\le p<\infty$ and $0\le \lambda <1$. If $H$ is bounded from $\mathcal L^{p,\lambda}(|x|^\beta)$ to $W\mathcal L^{p,\lambda}(|x|^\beta)$, then $\lambda-1< \beta<\lambda+p-1$. 
\end{proposition}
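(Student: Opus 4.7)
The upper bound $\beta<\lambda+p-1$ comes for free from Proposition~\ref{necrangehilbert}: the hypothesis forces $|x|^\beta\in A_{p+\lambda}$, and in dimension one a power weight $|x|^\beta$ lies in $A_q$ exactly when $-1<\beta<q-1$. Thus $\beta<p+\lambda-1$, and as a useful byproduct $\beta>-1$.

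For the strict lower bound $\beta>\lambda-1$ I would argue by contradiction, supposing $-1<\beta\le\lambda-1$. The plan is to exhibit a family $\{f_N\}_{N\ge 2}$ uniformly bounded in $\mathcal L^{p,\lambda}(|x|^\beta)$ but whose Hilbert transforms have weak Morrey norm tending to $\infty$. I take $f_N=\chi_{[1,N]}$; the explicit formula
\[
Hf_N(x)=\frac{1}{\pi}\log\left|\frac{x-1}{x-N}\right|
\]
shows that $Hf_N$ is monotone on $[-1/2,1/2]$ with $|Hf_N(x)|\ge(\log N)/(2\pi)$ there once $N$ is large enough (both endpoint values are of order $\log N$ and the sign of the derivative is constant in that range).

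The main technical step is the uniform bound $\|f_N\|_{\mathcal L^{p,\lambda}(|x|^\beta)}\le K$. Using Remark~\ref{select}, I reduce the supremum to balls $B(0,r)$ centered at the origin and to balls $B(c,r)$ with $r<|c|/4$. In the first case only $r\ge 1$ contributes and the integral is dominated by $r^{\beta+1}/(\beta+1)$, so dividing by $r^\lambda$ leaves $r^{\beta+1-\lambda}/(\beta+1)$, which is uniformly bounded precisely because the standing assumption $\beta\le\lambda-1$ makes the exponent nonpositive (and $\beta>-1$ keeps the constant finite). In the second case only $c\gtrsim 1$ contributes (for $c<0$ the ball is disjoint from $[1,N]$); there $|x|\sim c$ on $B$, the integral is $\lesssim rc^\beta$, and the constraints $r<c/4$ and $\lambda<1$ turn the quotient $r^{1-\lambda}c^\beta$ into $\lesssim c^{\beta+1-\lambda}$, again bounded by the same sign condition.

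Granted this, the weak-boundedness hypothesis yields $\|Hf_N\|_{W\mathcal L^{p,\lambda}(|x|^\beta)}\le CK$, while testing the weak norm on $B(0,1/2)$ at level $t=(\log N)/(4\pi)$ gives
\[
\|Hf_N\|_{W\mathcal L^{p,\lambda}(|x|^\beta)}^p\ge\frac{t^p\int_{-1/2}^{1/2}|x|^\beta\,dx}{(1/2)^\lambda}\gtrsim(\log N)^p,
\]
contradicting the uniform bound as $N\to\infty$. The main obstacle is the rather case-heavy verification that $\|f_N\|$ stays bounded independently of $N$; once this is in hand, the explicit formula for $Hf_N$ and the weak-norm lower bound close the argument in a couple of lines.
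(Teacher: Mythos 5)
Your argument is correct, and while the upper bound is handled exactly as in the paper (via Proposition~\ref{necrangehilbert} and the one-dimensional $A_{p+\lambda}$ range for power weights, which also gives you $\beta>-1$), your treatment of the lower bound is genuinely different. The paper argues in two steps: first, $H\chi_{(1,2)}\ge 1/(2\pi)$ on $(0,1)$ together with the observation that $\chi_{B(0,\delta)}\in W\mathcal L^{p,\lambda}(|x|^\beta)$ forces $\beta+1\ge\lambda$ gives the non-strict inequality; second, the endpoint $\beta=\lambda-1$ is excluded by the single test function $\chi_{(0,1)}$, whose transform $\pi^{-1}\log(|x|/|x-1|)$ has a logarithmic singularity at the origin that fails to lie in $W\mathcal L^{p,\lambda}(|x|^{\lambda-1})$ (checked on shrinking balls $B(0,e^{-t})$). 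You instead run a single contradiction over the whole range $-1<\beta\le\lambda-1$ using the family $\chi_{[1,N]}$: the failure mechanism is the logarithmic growth in $N$ of $H\chi_{[1,N]}$ on a \emph{fixed} ball, against a uniform bound on $\|\chi_{[1,N]}\|_{\mathcal L^{p,\lambda}(|x|^\beta)}$. Your uniform bound is verified correctly (the reduction via Remark~\ref{select}, the exponent $\beta+1-\lambda\le 0$ in both cases, and $\beta>-1$ to keep $\int_{-1/2}^{1/2}|x|^\beta\,dx$ finite are all the right ingredients). What the paper's route buys is brevity and an explicit single function $f$ with $Hf\notin W\mathcal L^{p,\lambda}(|x|^{\lambda-1})$; what yours buys is robustness --- it needs only a lower bound for $H\chi_{[1,N]}$ on a fixed set rather than the precise local behaviour of the singularity, at the cost of the case-by-case norm computation you flag yourself.
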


\begin{proof}
 The condition $\beta<\lambda+p-1$ is a consequence of $|x|^\beta\in A_{p+\lambda}$ as required by the previous proposition. 
 
As in the proof for the maximal operator in Proposition \ref{necrangepower}, the estimate $H(\chi_{1,2})(x)\ge 1/(2\pi)$ for $x\in (0,1)$ is enough to get $\beta\ge \lambda-1$. To rule out the Morrey estimate for $\beta\ge \lambda-1$ we consider $\chi_{(0,1)}$, which is in $\mathcal L^{p,\lambda}(|x|^{\lambda-1})$. A direct computation shows that  $H\chi_{(0,1)}(x)=\pi^{-1} \log (|x|/|x-1|)$. Then $|H\chi_{(0,1)}(x)|\ge c (-\log |x|) \chi_{(0,1/4)}(x)$, and the last function is not in $W\mathcal L^{p,\lambda}(|x|^{\lambda-1})$.  
\end{proof}

The result in this proposition was proved for the strong estimates (hence, $1<p<\infty$) by N.\ Samko in \cite[Theorem 4.7]{Sam09}, and in \cite{Sam13} she discussed the necessity of a more general condition. Our result gives also the necessity for the weak estimates ($1\le p<\infty$).

In the next proposition we give a direct proof of the boundedness of $M$ for the Morrey spaces with weight $|x|^{\lambda-n}$ (the endpoint of the allowed range). In \cite{Ta15} this is a consequence of a certain sufficient condition involving duality. 

\begin{proposition}\label{endp}
Let $0\le \lambda <n$. $M$ is bounded on $\mathcal L^{p,\lambda}(|x|^{\lambda-n})$ for $1<p<\infty$ and from $\mathcal L^{1,\lambda}(|x|^{\lambda-n})$ to $W\mathcal L^{1,\lambda}(|x|^{\lambda-n})$.
\end{proposition}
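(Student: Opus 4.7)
The plan is to exploit two key features of the weight $w(y) := |y|^{\lambda - n}$ for $0 \le \lambda < n$: first, $w \in A_1$, so the standard theory gives $M : L^p(w) \to L^p(w)$ for $1 < p < \infty$ and the weak-type $M : L^1(w) \to L^{1,\infty}(w)$; and second, $w(B(0, R)) \sim R^\lambda$, meaning the Morrey normalization by $r^\lambda$ is exactly matched to $w$ on balls centered at the origin. By Remark \ref{select} it suffices to control the Morrey norm on balls $B = B(x_0, r)$ that are either centered at the origin or satisfy $r \le |x_0|/4$; in the off-center case $|y| \sim |x_0|$ throughout $B$, so $w(B) \sim |x_0|^{\lambda - n} r^n$.

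Fixing such a ball $B$, I would split $f = f_1 + f_2$ with $f_1 = f \chi_{B(x_0, 2r)}$, and handle the two pieces separately. For the local piece $f_1$, the $A_1$ strong-type boundedness of $M$ (or its weak-type $(1,1)$ analogue when $p = 1$) yields
\[
\int_{B} (Mf_1)^p w \le C \int_{B(x_0, 2r)} |f|^p w \le C r^\lambda \|f\|_{\mathcal L^{p,\lambda}(w)}^p,
\]
with the corresponding distribution-function estimate for $p=1$; this contribution causes no trouble.

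The main work is the tail. Any ball $B(x, R')$ meeting $\supp f_2$ with $x \in B$ must have $R' \ge r$ and be contained in $B(x_0, 2R')$, giving on $B$ the pointwise bound
\[
Mf_2(x) \le C \sup_{R \ge r} R^{-n} \int_{B(x_0, 2R)} |f(y)|\, dy.
\]
I would then apply H\"older's inequality with the dual weight $w^{-p'/p} = |y|^{(n-\lambda)p'/p}$ (replaced by an $L^\infty$ estimate when $p = 1$), distinguishing the two regimes $2R \le |x_0|$ (where $|y| \sim |x_0|$ on $B(x_0, 2R)$) and $2R > |x_0|$ (where $B(x_0, 2R) \subset B(0, CR)$). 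A short computation of the resulting power integrals shows that the supremum is bounded by $C(|x_0|/r)^{(n-\lambda)/p} \|f\|_{\mathcal L^{p,\lambda}(w)}$ in the off-center case, and by $C \|f\|_{\mathcal L^{p,\lambda}(w)}$ in the centered case. Integrating this pointwise bound against $w$ on $B$ and using $w(B) \sim |x_0|^{\lambda - n} r^n$ yields the crucial cancellation $(|x_0|/r)^{n-\lambda} \cdot |x_0|^{\lambda - n} r^n = r^\lambda$, producing the required bound $\int_B (Mf_2)^p w \le C r^\lambda \|f\|_{\mathcal L^{p,\lambda}(w)}^p$.

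The weak-type case $p = 1$ is handled through the same splitting, with the pointwise bound on $Mf_2$ replacing a distribution-function argument: the level set $\{y \in B : Mf_2(y) > t\}$ is empty once $t$ exceeds the pointwise bound, while for smaller $t$ the trivial inequality $t w(B)/r^\lambda \le C \|f\|_{\mathcal L^{1,\lambda}(w)}$ closes the argument. The hard part will be precisely the sharp matching in the tail estimate: the deterioration factor $(|x_0|/r)^{(n-\lambda)/p}$ produced by H\"older is exactly compensated by the smallness $w(B) \sim |x_0|^{\lambda - n} r^n$ of the weight far from the origin, and this delicate balance is what makes $\beta = \lambda - n$ the endpoint; the argument would fail for $\beta < \lambda - n$, consistent with Proposition \ref{necrangepower}.
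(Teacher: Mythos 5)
Your proposal is correct and follows essentially the same route as the paper: the same splitting $f=f_1+f_2$ at scale $2r$, the $A_1$ (or weak $(1,1)$) boundedness for the local piece, a pointwise bound on $Mf_2$ over $B$, and the same case analysis in $R$ versus $|x_0|$ leading to the cancellation $(|x_0|/r)^{n-\lambda}\cdot |x_0|^{\lambda-n}r^n=r^\lambda$. The only cosmetic difference is that you run H\"older against the dual weight $w^{-p'/p}$ where the paper uses unweighted H\"older together with inequality \eqref{basicineq} (with $w\equiv1$, $\alpha=\lambda-n$); the computations are equivalent.
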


\begin{proof}
 Let $f\in \mathcal L^{p,\lambda}(|x|^{\lambda-n})$ with $\lambda>0$. Assume that it is nonnegative. Consider the ball $B_r:=B(x,r)$. Decompose $f$ as $f_1+f_2$, where $f_1=f\chi_{B_{2r}}$. Using the subadditivity of $M$ we have 
 \begin{equation*}
Mf(y)\le Mf_1(y)+Mf_2(y).
\end{equation*}
Using the boundedness of $M$ on $L^{p}(|x|^{\lambda-n})$ we have
\begin{equation*}
\int_{B_r}(Mf_1)(y)^p |y|^{\lambda-n}dy\le C_1 \int_{B_{2r}}f(y)^p |y|^{\lambda-n}dy\le C_2 r^{\lambda} \|f\|_{\mathcal L^{p,\lambda}(|x|^{\lambda-n})}^p.
\end{equation*}
On the other hand, $Mf_2(y)$ is almost constant on $B_r$ in the sense that $Mf_2(y_1)\break \sim Mf_2(y_2)\sim Mf_2(x)$ for $y_1,y_2\in B_r$. Then
\begin{equation}\label{cinco}
\int_{B_r}(Mf_2)(y)^p |y|^{\lambda-n}dy\le C_1 (Mf_2)(x)^p\int_{B_{r}}|y|^{\lambda-n}dy.
\end{equation}
Moreover,
\begin{equation*}
Mf_2(x)\sim\sup_{R>2r}\frac 1{R^n}\int_{B_R\setminus B_{2r}}f.
\end{equation*}
We distinguish two types of balls as in Remark \ref{select}. 

In the case of a ball centered at $0$, we have
\begin{equation*}
\frac 1{R^n}\int_{B_R\setminus B_{2r}}f\le \frac 1{R^n} \left(\int_{B_R}f^p\right)^{1/p} |B_R|^{1/p'}\le C\|f\|_{\mathcal L^{p,\lambda}(|x|^{\lambda-n})},
\end{equation*}
using \eqref{basicineq} with $w\equiv 1$ and $\alpha=\lambda-n$. Since the last integral in \eqref{cinco} is $Cr^\lambda$ we get the desired estimate. 

For a ball centered at $x\ne 0$ with radius $r\le |x|/4$ we consider first $R\ge |x|/2$. In such case, 
\begin{equation*}
\frac 1{R^n}\int_{B_R\setminus B_{2r}}f\le \frac 1{R^n}\int_{B(0,3R)}f\le C\|f\|_{\mathcal L^{p,\lambda}(|x|^{\lambda-n})}. 
\end{equation*}
The last integral in \eqref{cinco} is $C|x|^{\lambda-n}r^n$ and this is bounded by $Cr^{\lambda}$ because $r<|x|$ and $\lambda-n$ is negative. Let now $2r<R<|x|/2$. Then
\begin{equation*}
\frac 1{R^n}\int_{B_R\setminus B_{2r}}f\le \frac 1{R^n}\left(\int_{B_R}f^p\right)^{1/p}R^{n/p'} \le C|x|^{\frac{n-\lambda}p}R^{\frac{\lambda-n}p}\|f\|_{\mathcal L^{p,\lambda}(|x|^{\lambda-n})}. 
\end{equation*}
Replacing the last integral in \eqref{cinco} by $C|x|^{\lambda-n}r^n$ the needed estimate holds because $R^{\lambda-n}r^n \le r^\lambda$ due to $R>r$.

The proof of the weak type for $p=1$ is similar.  
\end{proof}

The range $\lambda-n< \beta<\lambda+n(p-1)$ for weights of type $|x|^\beta$ corresponds to all the power weights in $A_{p+\lambda/n}\cap RH_{n/(n-\lambda)}$. On the other hand, the result in Theorem \ref{teo1} is valid for $\lambda<n/\sigma'$, that is, we need $w\in RH_\sigma$ for some $\sigma>n/(n-\lambda)$ to get the estimate. Such a $\sigma$ exists for any weight in $RH_{n/(n-\lambda)}$ by the self-improvement property of the reverse H\"older inequalities (Gehring's lemma). The endpoint weight $|x|^{\lambda-n}$, for which the estimates for $M$ hold, is not in $RH_{n/(n-\lambda)}$, but it is in $RH_\sigma$ for every $\sigma<n/(n-\lambda)$. One could guess the necessity of a reverse H\"older condition of this type in Proposition \ref{necrange}, but we have not been able to get it. 

The weights in $A_{p+\lambda/n}\cap RH_{n/(n-\lambda)}$ are characterized by the factorization $u^{-\lambda/n} w$ with $u\in A_1$ and $w\in A_{p}\cap RH_{n/(n-\lambda)}$. The result in Theorem \ref{teo1} covers all the weights of this type for which $u$ is a power weight in $A_1$. By translation invariance power weights can be taken to be centered at a point different from the origin. The sufficiency of  $A_{p+\lambda/n}\cap RH_{n/(n-\lambda)}$ in  Theorem \ref{teo1} remains an open question for us.


\begin{thebibliography}{99}

\bibitem{Ad15}	Adams, D. R.:
Morrey spaces, 
Lecture Notes in Applied and Numerical Harmonic Analysis. Birkh\"auser/Springer, Cham  (2015).

\bibitem{CMP11} Cruz-Uribe, D. V., Martell, J. M., P\'erez, C.:
Weights, extrapolation and the theory of Rubio de Francia.
Operator Theory: Advances and Applications 215, Birkh\"auser, Basel (2011).

\bibitem{DuRo} Duoandikoetxea, J., Rosenthal, M.: Extension and boundedness of operators on Morrey spaces from extrapolation techniques and embeddings.  J. Geom. Anal.  28, 3081--3108  (2018).

\bibitem {JN91} Johnson, R., Neugebauer, C. J.: {Change of variable results for $A_p$ and reverse H\"older $RH_r$-classes}. Trans. Amer. Math. Soc. 328,  639--666  (1991).

\bibitem{KMM16} Knese, G., McCarthy, J. E., Moen, K.: 
Unions of Lebesgue spaces and $A_1$ majorants. 
Pacific J. Math. 280, 411--432  (2016). 

\bibitem{NkSa17} Nakamura, S., Sawano, Y.: The singular integral operator and its commutator on weighted Morrey spaces. Collect. Math. 68, 145--174  (2017).

\bibitem{Sam09} Samko, N.: Weighted Hardy and singular operators in Morrey spaces. J. Math. Anal. Appl. 350, 56--72 (2009).

\bibitem{Sam13} Samko, N.: 
On a Muckenhoupt-type condition for Morrey spaces.
Mediterr. J. Math. 10, 941--951 (2013). 

\bibitem{Ste93} Stein, E. M.:
Harmonic analysis: real-variable methods, orthogonality, and oscillatory integrals. Princeton University Press, Princeton, NJ (1993).

\bibitem{Ta15} Tanaka, H.: Two-weight norm inequalities on Morrey spaces. Ann. Acad. Sci. Fenn. Math. 40, 773--791 (2015).  

\end{thebibliography}
\end{document}